\newcommand{\pres}[2]{\langle #1\:|\:#2 \rangle}
\newtheorem{theorem}{Theorem}[section]
\newtheorem{lemma}[theorem]{Lemma}
\newtheorem{corollary}[theorem]{Corollary}
\newtheorem{proposition}[theorem]{Proposition}
\newtheorem*{claim*}{Claim}
\theoremstyle{definition}
\newtheorem{remark}[theorem]{Remark}
\newtheorem*{theorem*}{Theorem}
\newtheorem*{conjecture*}{Conjecture}
\newcommand{\B}{\mathbf{B}}
\newcommand{\PB}{\mathbf{PB}}
\newcommand{\A}{\mathcal{A}}
\newcommand{\Z}{\mathbb{Z}}
\def\Ddots{\mathinner{\mkern1mu\raise\p@
\vbox{\kern7\p@\hbox{.}}\mkern2mu
\raise4\p@\hbox{.}\mkern2mu\raise7\p@\hbox{.}\mkern1mu}}
\DeclareSymbolFontAlphabet{\amsmathbb}{AMSb}
\DeclareMathOperator{\Mon}{Mon}
\DeclareMathOperator{\Sgp}{Sgp}
\DeclareMathOperator{\oSigma}{\overline{\Sigma}}
\DeclareMathOperator{\cA}{\mathcal{A}}
\DeclarePairedDelimiter\abs{\lvert}{\rvert}
\let\oldabs\abs
\def\abs{\@ifstar{\oldabs}{\oldabs*}}
\newcounter{bobcomments}
\newcounter{cfcomments}
\begin{document}

\title[Membership problems in braid groups and Artin groups]{Membership problems in braid groups \\ and Artin groups}

\author[Gray]{Robert D. Gray }
\address{School of Engineering, Mathematics \& Physics, University of East Anglia, Norwich NR4 7TJ, England, UK}
\email{Robert.D.Gray@uea.ac.uk}

\author[Nyberg-Brodda]{Carl-Fredrik Nyberg-Brodda}
\address{June E Huh Center for Mathematical Challenges, Korea Institute for Advanced Study (KIAS), Seoul 02455, Republic of Korea} 
\email{cfnb@kias.re.kr}

\thanks{The first author is supported by the EPSRC Fellowship grant EP/V032003/1 ‘Algorithmic, topological and geometric aspects of infinite groups, monoids and inverse semigroups’. The second author is supported by the Mid-Career Researcher Program (RS-2023-00278510) through the National Research Foundation funded by the government of Korea, and by the KIAS Individual Grant MG094701 at Korea Institute for Advanced Study.}

\subjclass[2020]{20F36, 20F05 (primary), 20M05, 68Q70}

\keywords{Membership problems; Artin groups; braid groups.}

\date{\today}

\begin{abstract}
We study several natural decision problems in braid groups and Artin groups. We classify the Artin groups with decidable submonoid membership problem in terms of the non-existence of certain forbidden induced subgraphs of the defining graph. 
Furthermore, we also classify the Artin groups for which the following problems are decidable: 
the rational subset membership problem, semigroup intersection problem, and the fixed-target submonoid membership problem.
In the case of braid groups our results show that 
the submonoid membership problem, and each and every one of these problems, is decidable in the braid group $\B_n$ if and only if $n \leq 3$, which answers an open problem of Potapov (2013). 
Our results also generalize and extend results of Lohrey \& Steinberg (2008) who classified right-angled Artin groups with decidable submonoid (and rational subset) membership problem.      
\end{abstract}
%

\maketitle

\vspace{-8mm}

\section{Introduction}

\noindent Algorithmic problems are of central importance in the study of braid groups, and more generally of Artin groups.  
For braid groups, the word and conjugacy problems are both decidable, and they have been studied extensively from many different viewpoints in numerous important papers; see e.g. 
\cite{artin1947theory, 
birman1998new,
dehornoy1997fast, 
elrifai1994algorithms,
garside1969braid, 
garber2002new, 
gonzalez2014twisted
}.
For more background on braid groups, including the history and motivation for their study, including connections with representation theory, algebraic geometry and topology, and mathematical physics, we refer the reader to the book \cite{kassel2008braid}.

Artin groups, which are a natural generalization of braid groups closely related to Coxeter groups, first appeared in the literature in the papers \cite{deligne1972immeubles,brieskorn1972artin}, in which the word problem is also considered. Even after fifty years of intensive study, Artin groups remain a highly mysterious class and in general very little is known about the algorithmic properties of Artin groups; see e.g. the survey articles \cite{Birman1974, godelle2012basic} and \cite{mccammond2017mysterious}.  In particular, both the word and conjugacy problems remain open in general for Artin groups. A lot of work has been done in this area, and it continues to be a highly active area of research. It is also known that finite type Artin groups (which includes all braid groups) have nice algorithmic properties, and in particular have decidable word and conjugacy problem \cite{deligne1972immeubles,brieskorn1972artin}; it is also known that they are biautomatic
\cite{
charney1992artin,
charney1995geodesic
} which gives very efficient solutions to the word and conjugacy problems in these cases. Additional work on the word and conjugacy problems in finite type Artin groups include \cite{BirmanKoLee1998, BirmanGebhardt2007, BirmanGebhardt2007b, BirmanGebhardt2008, GebhardtGonzalez2010, GebhardtGonzalez2010b}. Other papers on Artin groups and their algorithmic properties include e.g. 
\cite{
brady2000three,
Crisp2009, 
haettel2023new,
Hermiller1999,
huang2020large,
holt2015conjugacy,
holt2013shortlex,
holt2012artin,
mccammond2017artin,
Picantin2001
}; for some recent progress, we also refer the reader to \cite{BlascoGarcia2022, BlascoGarciaCumplidoHoltMorrisWrightRees2024, blasco2022word} and references therein.
Finitely generated submonoids of Artin groups play an important part in the study of their structure.
Paris \cite{paris2002artin} proved that the positive submonoid, called the Artin monoid, naturally embeds in its Artin group. 
For some particular classes of Artin groups (e.g. those of spherical type) information about the Artin monoid can be used to understand the Artin group. Artin monoids have been studied e.g. in \cite{brieskorn1972artin, boyd2024artin, michel1999note}.  
For Artin groups in general there is currently no known way to translate information from the Artin monoid to the Artin group; see the introduction of \cite{boyd2024artin} for further discussion of this.   

When studying finitely generated submonoids of Artin groups certain natural algorithmic problems naturally arise. The most fundamental of these is the \textit{membership problem}, which given a finitely generated submonoid of the group asks whether there is an algorithm that, given an element of the group (as a word over the generators), can decide whether that element belongs to the submonoid. 
This generalizes the subgroup membership problem, also sometimes called the generalized word problem, which asks about deciding membership in a finitely generated subgroup.   

In this article we will study the membership problem in finitely generated submonoids, and more generally rational subsets, of Artin groups. We will also investigate a range of other related membership-type decision problems including the fixed-target submonoid membership problem and semigroup intersection problem.   
Further motivation for studying the submonoid membership problem in Artin groups comes from the fact that the word problem for Artin groups can be reduced to deciding membership in particular finitely generated submonoids. For example, it is not hard to see that for any Artin group if membership in the Artin submonoid of that Artin group is decidable then it follows that the Artin group has decidable word problem. Indeed, in this case to decide whether a given word equals the identity one can first check whether it belongs to the Artin submonoid. If it does not belong to the submonoid, then it is not equal to the identity element; and if it does, then we can use the fact that the Artin monoid (trivially) has decidable word problem to test whether the word equals the identity in the Artin monoid, and hence in the group. 

Membership problems in braid and Artin groups have already received some attention in the literature. Makanina \cite{Makanina1981} proved that $\B_n$ has undecidable subgroup membership problem for $n \geq 5$, and Potapov \cite{Potapov2013Composition} proved that the submonoid membership problem in $\B_3$ is $\operatorname{NP}$-hard when $n =3$. He left as an open question whether the submonoid membership problem is decidable in the braid group $\B_4$. In a later paper of Ko \& Potapov \cite{KoPotapov2017} the submonoid membership problem in $\B_4$ is again mentioned as an open problem, and in relation to this problem they observe that by a result of \cite{Akimenkov1991} there is no embedding from a set of pairs of words into $\B_4$, leading them to speculate that 
the submonoid membership problem might be decidable for $\B_4$ since 
the proof that $\B_5$ has undecidable submonoid membership problem, and many other undecidability results for $\B_5$,    
essentially rely on finding an embedding from a set of pairs of words into $\B_5$; see 
\cite{Makanina1981, Potapov2013Composition}.

The first result we will prove in this paper is that, contrary to their expectation, the braid group $\B_4$ in fact has undecidable submonoid membership problem. More precisely we shall show that $\B_4$ contains a fixed finitely generated submonoid in which membership is undecidable. Combined with Potapov's results in \cite{Potapov2013Composition}, this completes the classification of braid groups with decidable submonoid membership problem as being precisely those braid groups $\B_n$ where $n \leq 3$.     
The key new tool that we use to establish this result for braid groups is to make use of the classification of right-angled Artin groups (RAAGs) with decidable submonoid membership problem \cite{Lohrey2008} due to Lohrey \& Steinberg, combined with a result of Droms, Lewis \& Servatius about embedding particular RAAGs into $\B_4$; see Theorem~\ref{thm:BraidClassification}.
This result also shows that the same classification result also determines the braid groups with decidable rational subset membership problem.  
  
The families of braid groups and of RAAGs, both belong to the larger class of Artin groups. With the classification for braid groups with decidable submonoid membership problem 
described in the previous paragraph 
in hand, and also the corresponding result for RAAGs from \cite{Lohrey2008}, the next natural question is whether we can classify the Artin groups with decidable submonoid membership problem. It turns out that we can. In Theorem~\ref{thm_Artin_in_general} we shall
completely classify the Artin groups with decidable submonoid membership problem in terms of the non-existence of certain forbidden induced subgraphs of the defining graph. 

One consequence of the results we prove in this paper is to show that for Artin groups there is a close connection between the submonoid membership problem and another well-studied notion in the literature called subgroup separability (also called LERF). It will follow from our results, together with the results from \cite{AlmeidaLima2021}, 
that the class of Artin groups with decidable submonoid membership problem coincides exactly with the class of Artin groups that are subgroup separable. 
We stress that for finitely presented groups there is in general  no implication between these properties in the sense that 
a subgroup separable group need not have decidable submonoid membership problem, and vice versa.  
Indeed, finitely generated nilpotent groups are subgroup separable by Mal'cev 1948 
(see  
\cite[Exercise 11, Chapter 1]{segal2005polycyclic})
but there are nilpotent groups with undecidable submonoid membership problem; see e.g. \cite{Romankov2022}.   
For the other direction observe that the Baumslag-Solitar group $\operatorname{BS}(1,2)$ has decidable submonoid membership problem by \cite{Cadilhac2020} but it is not subgroup separable by results from 
\cite{blass1974application} (see also \cite[Proposition~1]{raptis1996subgroup}).   

In addition to the submonoid membership problem, there are a range of other natural membership-type problems in groups that have been introduced and investigated including: rational subset membership, semigroup intersection, the group problem, and the identity problem (see below for definitions of these notions). 
The study of these decision problems has motivation coming from both mathematics and theoretical computer science, and is a burgeoning field of research with a lot of recent intensive activity; see e.g. the recent survey articles \cite{Dong2023, lohrey2024membership} and also \cite{Lohrey2015}. 
For a helpful diagram showing how all these decision problems relate to each other we refer the reader to \cite[Fig. 1]{Dong2023}. 
In addition to the submonoid membership problem, we shall also consider these other decision problems listed above for Artin groups. We will show that our classification of Artin groups with decidable submonoid membership problem also gives a classification of Artin groups with decidable rational subset membership problem, and with decidable semigroup intersection problem; see Corollary~\ref{corol_Artin_in_general_OtherAlgProblems}. 

We do not currently know if this extends to also classify the Artin groups with decidable group problem or identity problem. In this direction in Theorem~\ref{thm_Artin_in_general_OtherAlgProblems} we identify some families of Artin groups for which the identity problem and group problem are undecidable, and in the remaining cases of our classification we show that a related problem is undecidable: the fixed-target submonoid membership problem (that we shall define). These results are established by proving several new undecidability results for the right-angled Artin group $A(P_4)$, where $P_4$ denotes the path graph on four vertices, in Section~\ref{Sec:AP4-undec}.  

In addition to presenting these algorithmic results on braid and Artin groups, another aim of this paper is to bring to the attention to those working on group and semigroup membership problems this method of embedding $A(P_4)$ as an approach to proving undecidability results for groups that are known not to contain $F_2 \times F_2$.  Indeed, it was recently proved in \cite[Corollary~6.4]{foniqi2023membership} that just embedding the trace monoid $T(P_4)$ into a group is enough to yield undecidability of membership in rational subsets. We believe that there are likely many more natural situations when pairs of words cannot be embedded into a group, but $A(P_4)$ or $T(P_4)$ can, giving rise to undecidability results. 
This is, for example, exactly what happens in the case of one-relator groups and monoids; see \cite{gray2020undecidability, foniqi2023membership} and also the related work \cite{minasyan2024right}.

The main results of the article are the following:
\begin{itemize}
\item We classify precisely when the submonoid (and rational subset) membership problem is decidable in braid groups (Theorem~\ref{thm:BraidClassification}). 
\item We show that the fixed-target submonoid membership problem is undecidable in $\B_4$ (Theorem~\ref{Thm:Undecidability-b4}).
\item We classify precisely when the submonoid membership problem is decidable in Artin groups in terms of the non-existence of certain forbidden induced subgraphs of the defining graph (Theorem~\ref{thm_Artin_in_general}).
\item We show that the same classification also applies for decidability of other decision problems (the rational subset membership and the semigroup intersection problem) in Artin groups (Corollary~\ref{corol_Artin_in_general_OtherAlgProblems}).
\item We identify some families of Artin groups for which the identity problem and group problem are undecidable (Theorem~\ref{thm_Artin_in_general_OtherAlgProblems}). 
\end{itemize}

\subsection*{Acknowledgements} We wish to thank the anonymous referees for their careful reading of the article and useful comments.

\section{Background}\label{Sec:background}

\subsection{Monoid presentations and rational subsets}

A finite set $A = \{ a_1, \dots, a_n \}$ is called an \textit{alphabet}. The free monoid on $A$, i.e.\ the set of all finite-length words on $A$ with the operation concatenation, is denoted $A^\ast$. Let $A^{-1}$ denote a set in bijective correspondence with $A$ via $a_i \mapsto a_i^{-1}$, and such that $A \cap A^{-1} = \varnothing$. Then the free group $F_A$ on $A$ is the set of all freely reduced words over $A \cup A^{-1}$, i.e.\ words not containing subwords of the form $xx^{-1}$ or $x^{-1}x$ for any $x \in A$. If $|A| = n$, then the free group of rank $n$ is denoted $F_n$. For a set of relations $R \subseteq F_A \times F_A$, a \textit{group presentation} $\pres{A}{R}$ is the quotient of $F_A$ by the least normal subgroup containing $uv^{-1}$ for all $(u, v) \in R$. For more details on combinatorial group theory, we refer the reader to \cite{Lyndon1977}. 

A subset of $A^\ast$ is called a \textit{language}. A \textit{finite automaton} $\cA$ (over the alphabet $A$) consists of a finite set $Q$ of \textit{states}, a finite set $\Delta \subseteq Q \times A \times Q$ of \textit{transitions}, a distinguished \textit{start state} $q_0 \in Q$, and a set $F \subseteq Q$ of \textit{final states}. We will consider $\cA$ as a directed graph, with vertex set $Q$ and an edge $q \xrightarrow{a} q'$ whenever $(q, a, q') \in \Delta$. The \textit{language accepted by $\cA$}, denoted $L(\cA)$, is the set of all words $w \equiv a_1 a_2 \cdots a_n$, where $x_i \in A$, for which there exist $q_1, \dots, q_n \in Q$ such that for all $1 \leq i \leq n$, we have $(q_{i-1} \xrightarrow{a_i} q_i) \in \Delta$, i.e.\ such that $w$ labels a directed path from $q_0$ to some state in $F$. Any language $R \subseteq A^\ast$ such that $R = L(\cA)$ for some finite automaton $\cA$ is called \textit{regular}. To simplify some technical steps, we will also allow $\varepsilon$-\textit{transitions}, being edges of the form $q_i \xrightarrow{\varepsilon} q_{j}$. This means that we can pass from state $q_i$ to $q_{j}$ without reading any letter. It is well-known that the class of regular languages is the same as the class of languages accepted by finite automata allowing $\varepsilon$-transitions. 

For a monoid $M$ and a subset $X \subseteq M$, we denote by $\Mon(X)$ the \textit{submonoid generated by $X$}. The subsemigroup generated by $X$ is denoted $\Sgp(X)$. The set of \textit{rational subsets} of $M$ is the least subset of $2^M$ such that (i) any finite subset is rational; (ii) the union, product, resp.\ the submonoid generated by a rational subset is again rational. Let $A$ be a finite generating set of a monoid $G$, and let $\pi \colon A^\ast \to G$ be a surjective homomorphism. Then it is classical to show (see \cite[Proposition~1]{Lohrey2015}) that $L \subseteq G$ is rational if and only if there is a finite automaton $\cA$ on $A$ such that $L = \pi (L(\cA))$. In particular, this definition does not depend on the choice of finite generating set $A$ or homomorphism $\pi$. If $X \subseteq A^\ast$, then if the context is clear we will often write $\Mon(X)$ for $\Mon(\pi(X))$, and analogously for $\Sgp(X)$. 

\subsection{Membership problems}\label{Subsec:decision problems}

There are many classical decision problems in algebra, with the most fundamental example being the \textit{word problem}, which is the problem of deciding whether or not two words over some generating set of a (semi)group represent the same element. Another broad class of decision problems that has distinguished itself as being of particular importance is the class of \textit{membership problems}. In its most general form, a membership problem for a monoid $M$ takes as input a subset of $M$ (e.g.\ a submonoid), and an element of $M$ (represented by a word), and decides whether or not the element belongs to the subset. By fixing parts of the input in this general membership problem, we obtain more restricted problems: if the input word is fixed, we obtain a \textit{fixed-target} membership problem; and if the subset is fixed, then we obtain a \textit{non-uniform} membership problem (otherwise, it is \textit{uniform}). Of course, if both the input word and the subset are fixed, then the membership problem has no input and is trivially decidable.

There are many actively studied decision problems which fall under these umbrella terms. Let $G$ be a group generated by a finite set $A$, with a surjective homomorphism $\pi \colon A^\ast \to G$. The \textit{(uniform) submonoid membership problem} for $G$ asks, on input a finite set $X \subseteq A^\ast$ of words, and a word $w \in A^\ast$, whether or not $\pi(w) \in \Mon(X)$. The \textit{non-uniform} submonoid membership is the same problem, except the finite subset $X$ is considered fixed and not part of the input; similarly, the \textit{fixed-target} submonoid membership problem fixes the word $w$, and only has $X$ as part of the input. This problem will thus be referred to as the ``fixed-target submonoid membership problem for $w$ in $M$''. All the above terminology is retained if instead of submonoids we consider membership in \textit{subsemigroups} $\operatorname{Sgp}(X)$. A particular case of the fixed-target subsemigroup problem is when the fixed target is $1$, i.e.\ when we are given a set $X$ and asked to decide if $1 \in \Sgp(X)$. This problem is known in the literature, e.g.\ \cite{Dong2023}, as the \textit{identity problem}\footnote{The reader is warned that the terminology ``identity problem'' is somewhat unfortunate, as this is also the classical name for the word problem, used e.g.\ by Dehn \cite{Dehn1911}.}. Analogously, the \textit{(uniform) rational subset membership problem} for $G$ asks, on input a finite automaton $\cA$ over $A$ and a word $w \in A^\ast$, whether or not $\pi(w) \in \pi(L(\cA))$. The \textit{non-uniform} version of this problem is the same, except the automaton $\cA$ is fixed, and not part of the input. In principle, one could consider the \textit{fixed-target} rational subset membership problem, fixing a target element $w$, and leaving $\cA$ as the only input. However, this can easily seen to be equivalent to the (uniform) rational subset membership problem; we thank C.\ Bodart for pointing out this fact to us. 

Another important membership-type problem that will be studied herein is the \textit{subsemigroup intersection problem}, which asks, on input two finite subsets $X_1, X_2 \subseteq A^\ast$, whether or not $\operatorname{Sgp}(X_1) \cap \operatorname{Sgp}(X_2) = \varnothing$ in $G$. This also has more and less uniform, as well as fixed-target, variants. For example, one might fix one of the subsets $X_i$ (making the problem less uniform), or ask if some \textit{fixed} element sits in the intersection (making the problem of a fixed-target type). We remark that if a group $G$ has decidable rational subset membership problem, then we can also solve the identity problem and the subsemigroup intersection problem in $G$. Indeed, the first is immediate, and the latter follows from the fact that $S_1S_2^{-1}$ is a rational subset of $G$ for any two finitely generated subsemigroups $S_1, S_2$. But $S_1 \cap S_2 \neq \varnothing$ if and only if $1 \in S_1 S_2^{-1}$, and since we decide the latter, we can also decide the former. Finally, another decision problem is the \textit{group problem}, which takes as input a finite set $X$ of words and decides whether or not $\Sgp(X)$ is a subgroup of $G$. 

Decidability of any of the above decision problems does not depend on the finite generating set chosen. Other decision problems, e.g.\ the subgroup membership problem, are defined analogously. Decidability of the rational subset membership problem is preserved by taking free products and finite extensions of groups, see \cite[Theorem~15]{Lohrey2015}. On the other hand, decidability of the submonoid membership is \textit{not} preserved by taking free products, as recently shown by Bodart \cite{Bodart2024}.

\begin{remark}\label{rmk:preservation}
The decision problems discussed in this section behave well when passing from a finitely generated group to a finitely generated subgroup. More precisely, let $G$ be a finitely generated group and $H \leq G$ a finitely generated subgroup, and suppose that $\mathcal{P}$ is any of the following decision problems:  submonoid membership problem, subgroup membership problem, rational subset membership problem,  fixed-target submonoid membership problem with a fixed target element $\gamma$, subsemigroup intersection problem, group problem, and identity problem. Then it is straightforward to see that if $\mathcal{P}$ is decidable in $G$ then $\mathcal{P}$ is also decidable in $H$. This is an important tool for showing various problems are undecidable in a group, by embedding a suitable group for which we know the problem is undecidable.  
  \end{remark}

\subsection{Artin and braid groups}\label{Subsec:Artin-braid-groups}

Let $\Gamma$ be a finite simplicial graph with edges labelled by natural numbers greater than or equal to $2$. Then the Artin group $A(\Gamma)$ is the group defined by the presentation with generating set the set $V\Gamma$ of vertices of $\Gamma$ and a defining relation   
\[
\underbrace{abab\ldots}_{\mbox{$m$-factors}} = \underbrace{baba\ldots}_{\mbox{$m$-factors}} 
\]      
for each edge that connects vertices $a$ and $b$ and is labelled $m$. We call a subgraph $\Delta$ of $\Gamma$ \emph{induced} if for any two vertices in $\Delta$ if they are adjacent in $\Gamma$ then they are also adjacent in $\Delta$. 

In this article there are two particular families of Artin groups that will play an important role, namely \textit{braid groups} and \textit{right-angled} Artin groups. For $n \geq 3$, the $n$\textit{-strand braid group} $\B_n$ is defined to be the group
\begin{equation}\label{Eq:braid-group-presentation}
\B_n := \pres{\sigma_1, \dots, \sigma_{n-1}}{\sigma_i \sigma_{i+1} \sigma_i = \sigma_{i+1} \sigma_i \sigma_{i+1}, \: \sigma_i \sigma_j = \sigma_j \sigma_i},
\end{equation}
with the first set of relations being taken for all $1 \leq i <n-1$, and the second set only being taken for those $i, j$ such that $|i-j| \geq 2$. There is a natural pictorial description of any element (braid) of a braid group, which also gives rise to a simple way to multiply braids together; see \cite[\S 1.2.2]{kassel2008braid}. 
Clearly the $n$-strand braid groups $\B_n$ are all Artin groups. For example, the braid group $\B_4$ is the Artin group $A(\Gamma)$ where $\Gamma$ is a triangle with edges labelled $(2,3,3)$.     

A \emph{right-angled Artin group} is an Artin group with the property that every edge in its defining graph $\Gamma$ is labelled by the number $2$. In this case, we omit the edge labels: given a finite, undirected, simple graph $\Gamma$ with vertices $V$ and edges $E$, the \textit{right-angled Artin group} $A(\Gamma)$ is thus the group with finite presentation 
\begin{equation}\label{Eq:raag-presentation}
A(\Gamma) := \pres{V}{v_i v_j = v_j v_i \: \text{ if and only if $(v_i, v_j) \in E$}}.
\end{equation}
Due to the form of their defining relations, right-angled Artin groups are sometimes also called \textit{partially commutative groups}. For a general introduction to right-angled Artin groups, see \cite{Charney2007}. 
Given a graph $\Gamma$ the monoid $T(\Gamma)$ with the same presentation as in \eqref{Eq:raag-presentation}, but viewed as a monoid presentation, is called the \emph{trace monoid} with defining graph $\Gamma$. It is known \cite{paris2002artin} that the trace monoid naturally embeds in the corresponding right-angled Artin group.     

\section{Braid groups and right-angled Artin groups}\label{Sec:AP4-undec}

As explained in the introduction, Potapov \cite{Potapov2013Composition} in his study of the membership problem in braid groups left as an open problem the case of the submonoid membership problem for $\B_4$, whereas the problem is decidable in $\B_3$ and undecidable in $\B_n$ for $n \geq 5$. We begin this section by presenting a result that completes the classification of braid groups with decidable submonoid membership problem, and shall also observe that the result also characterizes those braid groups with decidable rational subset membership problem. 

\begin{theorem}\label{thm:BraidClassification} 
The braid group $\B_n$ has 
decidable submonoid membership problem if and only 
$n \leq 3$.
Furthermore, 
if $n \leq 3$ then $\B_n$ has decidable rational subset membership problem, while  
if $n \geq 4$ then $\B_n$ contains a fixed finitely generated submonoid in which membership is undecidable.      
  \end{theorem}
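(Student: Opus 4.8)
The plan is to treat the two directions separately, reducing each to the classification of right-angled Artin groups with decidable submonoid and rational subset membership problem obtained by Lohrey \& Steinberg in~\cite{Lohrey2008}. For the decidability direction, $\B_1$ is trivial and $\B_2 \cong \Z$ is abelian, so their rational subsets are effectively semilinear and the rational subset membership problem is decidable; the content is the case $n = 3$. Here I would use the classical fact that $\B_3$ is virtually $F_2 \times \Z$: the centre $Z(\B_3)$ is infinite cyclic, and $\B_3 / Z(\B_3) \cong \mathrm{PSL}(2,\Z) \cong \Z/2\Z \ast \Z/3\Z$ has a free subgroup of rank $2$ and finite index, whose preimage $H \leq \B_3$ is a central extension of $F_2$ by $\Z$ and hence --- since $H^2(F_2;\Z) = 0$ --- splits as the direct product $H \cong F_2 \times \Z$. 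Now $F_2 \times \Z$ is exactly the right-angled Artin group $A(P_3)$ on the path with three vertices, and $P_3$ has no induced $C_4$ or $P_4$, so $A(P_3)$ has decidable rational subset membership problem by~\cite{Lohrey2008}. Since decidability of the rational subset membership problem passes to finite extensions~\cite[Theorem~15]{Lohrey2015}, the group $\B_3$ has decidable rational subset membership problem, and a fortiori decidable submonoid membership problem.

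For the undecidability direction, the strategy is to place inside $\B_4$ a finitely generated subgroup carrying a fixed finitely generated submonoid with undecidable membership, and then push it up the tower of braid groups. First I would invoke the embedding $A(P_4) \hookrightarrow \B_4$ of the right-angled Artin group on the path $P_4$ on four vertices, due to Droms, Lewis \& Servatius. Since the defining graph is $P_4$ itself, $A(P_4)$ has undecidable submonoid membership problem by~\cite{Lohrey2008}, and in fact the undecidability is witnessed by a single fixed finitely generated submonoid, the reduction producing it being independent of the instance --- this is revisited and, where needed, strengthened in Section~\ref{Sec:AP4-undec}. Transporting that submonoid along the embedding and applying Remark~\ref{rmk:preservation}, the group $\B_4$ contains a fixed finitely generated submonoid with undecidable membership. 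Finally, for every $n \geq 5$ the standard strand inclusion $\sigma_i \mapsto \sigma_i$ $(1 \leq i \leq 3)$ embeds $\B_4$ into $\B_n$, so another application of Remark~\ref{rmk:preservation} gives the same conclusion for $\B_n$. Combined with the decidability direction, this yields the stated classification.

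The deductions above are essentially formal once the two external inputs are available, so the real difficulty is concentrated in those inputs. The crucial one is the embedding $A(P_4) \hookrightarrow \B_4$: it cannot come from the naive right-angled Artin subgroup generated by the squares $\sigma_1^2, \sigma_2^2, \sigma_3^2$ of the standard generators, which by the Tits conjecture --- known for braid groups --- is merely $\Z^2 \ast \Z$ and far too small; and, more fundamentally, Akimenkov's theorem~\cite{Akimenkov1991} tells us that $\B_4$ contains no copy of $F_2 \times F_2 = A(C_4)$, hence no right-angled Artin subgroup whose defining graph has an induced $C_4$, so $P_4$ is in effect the smallest graph with any chance of embedding and the embedding must be constructed explicitly. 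The second, more routine, point is to ensure that the undecidability for $A(P_4)$ is of the non-uniform (fixed-submonoid) kind rather than merely uniform, which is precisely why Section~\ref{Sec:AP4-undec} re-examines $A(P_4)$ directly instead of only quoting~\cite{Lohrey2008}.
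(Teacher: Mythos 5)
Your proposal is correct and follows essentially the same route as the paper: for $n\geq 4$ it combines the Droms--Lewin--Servatius embedding $A(P_4)\hookrightarrow \B_4$ (then $\B_4\leq\B_n$) with Lohrey \& Steinberg's fixed undecidable submonoid in $A(P_4)$ and Remark~\ref{rmk:preservation}, and for $n\leq 3$ it reduces to the fact that $\B_3$ is virtually $F\times\Z$ together with closure of rational subset decidability under finite extensions. The only cosmetic differences are that you derive the virtual $F_2\times\Z$ structure directly from $\B_3/Z(\B_3)\cong\mathrm{PSL}(2,\Z)$ rather than citing the torus-knot-group literature, and you quote Theorem~\ref{Thm:LS} for $A(P_3)=F_2\times\Z$ where the paper cites \cite{Kambites2007}.
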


Theorem~\ref{thm:BraidClassification} will be proved by applying several results from the literature on right-angled Artin groups and their embeddings  into braid groups. First, we have the following result of Lohrey \& Steinberg in which they classify the right-angled Artin groups with decidable submonoid, and rational subset, membership problem.

\begin{theorem}[Lohrey \& Steinberg {\cite[Theorem~1 and Corollary~3 ]{Lohrey2008}}]\label{Thm:LS} 
A right-angled Artin group $A(\Gamma)$ has decidable submonoid membership problem if and only $\Gamma$ does not contain $C_4$ or $P_4$ as an induced subgraph.      
Furthermore, if $\Gamma$ does not contain $C_4$ or $P_4$, then $A(\Gamma)$ has decidable rational subset membership problem, while if $\Gamma$ does contain $C_4$ or $P_4$ then $A(\Gamma)$ contains a fixed finitely generated submonoid in which membership is undecidable.     
  \end{theorem}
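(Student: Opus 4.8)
The plan is to prove both implications of the equivalence together with the stronger rational-subset refinement, splitting into a \emph{decidability direction} (when $\Gamma$ avoids the forbidden subgraphs) and an \emph{undecidability direction} (when one of them occurs). The key structural input on the graph side is the classical fact that a finite simple graph contains no induced $C_4$ and no induced $P_4$ if and only if it is \emph{trivially perfect}, i.e.\ it can be built from single vertices by the two operations of taking disjoint unions and adding a universal vertex (one adjacent to all others). Under the dictionary between graph operations and group operations, disjoint union of graphs corresponds to free product of the associated right-angled Artin groups, while adding a universal vertex $v$ to $\Gamma$ corresponds to passing from $A(\Gamma)$ to $A(\Gamma)\times\langle v\rangle\isom A(\Gamma)\times\Z$. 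Thus the forbidden-subgraph hypothesis says exactly that $A(\Gamma)$ is obtained from copies of $\Z$ by iterating free products and direct products with $\Z$.

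For the undecidability direction I would first record the elementary but crucial observation that whenever $\Delta$ is an induced subgraph of $\Gamma$, the homomorphism sending the generators outside $\Delta$ to $1$ and fixing the rest is a well-defined retraction $A(\Gamma)\to A(\Delta)$; in particular $A(\Delta)$ embeds in $A(\Gamma)$ as a retract. Combined with Remark~\ref{rmk:preservation} this transfers undecidability upward, and the retraction is precisely what preserves a \emph{fixed} witnessing submonoid: if $M\subseteq A(\Delta)$ is a fixed finitely generated submonoid with undecidable membership and $r\colon A(\Gamma)\to A(\Delta)$ is the retraction, then for $g\in A(\Gamma)$ one has $g\in M$ if and only if $r(g)=g$ and $r(g)\in M$, so any membership algorithm for $M$ in $A(\Gamma)$ would solve it in $A(\Delta)$. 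It therefore suffices to produce, for each of $\Delta=C_4$ and $\Delta=P_4$, a fixed finitely generated submonoid of $A(\Delta)$ with undecidable membership. For $C_4$ one uses $A(C_4)\isom F_2\times F_2$ together with Mihailova's construction, which yields from a fixed finitely presented group with undecidable word problem a fixed finitely generated subgroup of $F_2\times F_2$ with undecidable membership; adjoining inverses realises it as a fixed submonoid. The case $\Delta=P_4$ is the genuinely new and hardest ingredient: since $A(P_4)$ does \emph{not} contain $F_2\times F_2$, one cannot appeal to Mihailova, and must instead encode an undecidable problem (for instance Post's Correspondence Problem, or the halting problem of a two-counter machine) directly into membership in a fixed submonoid, exploiting the partial-commutation structure of $P_4$ (where $b,c$ commute while the endpoints $a,d$ are controlled by them) to simulate the matching or computation, with the trace monoid $T(P_4)$ supplying the positive part of the encoding.

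For the decidability direction I would argue by induction on the trivially-perfect construction of $\Gamma$, proving the stronger statement that $A(\Gamma)$ has decidable rational subset membership; decidable submonoid membership then follows since a finitely generated submonoid is a rational subset. The base case $\Z$ is immediate, as rational subsets of $\Z$ are effectively semilinear. The disjoint-union step is handled by the cited preservation of rational subset membership under free products, \cite[Theorem~15]{Lohrey2015}. The remaining and decisive step is to show that decidability of rational subset membership is preserved under direct product with $\Z$, i.e.\ to reduce the problem in $A(\Gamma)\times\Z$ to the already-established cases on fewer vertices.

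I expect this last step to be the main obstacle. An automaton over the generators of $A(\Gamma)\times\Z$ records both an element of $A(\Gamma)$ and a net exponent of the central generator $t$, and deciding whether a target $(g_0,n_0)$ is accepted amounts to finding a path whose $A(\Gamma)$-value is $g_0$ \emph{and} whose $t$-exponent equals $n_0$; the latter is a non-regular counting constraint that plain rational subset membership in $A(\Gamma)$ cannot detect. I would therefore strengthen the inductive invariant to a Presburger-enriched form of rational subset membership: decidability, for a rational subset presented by an automaton, of whether it meets a prescribed semilinear constraint on the central $\Z$-directions, and show that this enriched property is what is genuinely closed under both free product and direct product with $\Z$. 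Centrality of the new $\Z$-factor is what makes the reduction feasible, since the counter can be resolved independently of the group computation by semilinearity; making this resolution effective and compatible with the free-product step is the crux of the whole decidability half. Assembling the two directions then yields the stated equivalence, and since the decidability direction in fact establishes decidable rational subset membership while the undecidability direction exhibits a fixed finitely generated submonoid with undecidable membership, the full strength of the statement, including the rational-subset refinement, follows.
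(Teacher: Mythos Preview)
The paper does not prove this statement: Theorem~\ref{Thm:LS} is quoted as an external result of Lohrey and Steinberg, with no argument supplied, so there is no proof in the paper to compare your proposal against.

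That said, your outline is a faithful reconstruction of the Lohrey--Steinberg strategy, and the paper in fact corroborates several of its ingredients elsewhere. The structural input (a finite graph avoids induced $C_4$ and $P_4$ iff it is trivially perfect, hence built from points by disjoint union and coning) and its translation into free products and direct products with $\Z$ is correct. Your diagnosis that plain rational subset membership is not obviously closed under $-\times\Z$, and that one must carry a Presburger-type strengthening through the induction, is exactly right: this is precisely the class of \emph{SLI-groups} that the present paper invokes in Section~\ref{Sec:ArtinGroups}, and Lemma~\ref{lem_LS_closureProperties} records the very closure properties (free product, $\times\Z$, finite-index overgroups) you anticipate needing. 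For the undecidability direction your retraction argument and the Mihailova treatment of $A(C_4)\cong F_2\times F_2$ are standard and correct. The one place your sketch diverges from what Lohrey and Steinberg actually do is the $P_4$ case: rather than a direct PCP or two-counter-machine encoding into a submonoid, they first establish undecidability of \emph{rational subset} membership (via an encoding into the trace monoid) and then reduce rational subset membership to membership in a fixed finitely generated submonoid by the same device that underlies Lemma~\ref{Lem:reachability-for-RAT1-reduces-to-SMP-reach-xY} of the present paper (which is explicitly modelled on their Lemma~11). So your plan is sound in architecture, but the concrete mechanism you propose for the $P_4$ step is not the one used in the source.
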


Here, and throughout this present article, $C_4$ denotes the cycle graph on four vertices, and $P_4$ denotes the path graph on four vertices. As mentioned in the introduction, there is no embedding of pairs of words, i.e.\ the trace monoid $T(C_4)$, into $\B_4$, and hence the group $A(C_4) \cong F_2 \times F_2$ also does not embed into the braid group $\B_4$. However, the following result of Droms, Lewin \& Servatius shows that the group $A(P_4)$ does embed into $\B_4$.   

\begin{theorem}[Droms, Lewin \& Servatius {\cite[Corollary~1]{Droms1991}}]\label{Thm:Droms-Ap4-embeds}
The subgroup of $\B_4$ generated by $s_2^2, (s_2 s_3 s_2)^2, s_3^2, s_1^2$ is isomorphic to $A(P_4)$. 
\end{theorem}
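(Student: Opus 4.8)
Write $a=s_1^2$, $b=s_2^2$, $c=(s_2s_3s_2)^2$, $d=s_3^2$, and read $P_4$ as the path $a-d-c-b$, so that its three edges are $\{a,d\}$, $\{d,c\}$ and $\{c,b\}$. The plan is to build the isomorphism in the obvious direction and then prove injectivity by identifying both sides with one and the same amalgamated free product. First I would verify the three defining relations of $A(P_4)$: the relation $[a,d]=1$ is immediate from $[s_1,s_3]=1$, and $[c,d]=[c,b]=1$ holds because $c=(s_2s_3s_2)^2=(s_3s_2s_3)^2$ is the square of the Garside element of the standard parabolic $\langle s_2,s_3\rangle\cong\B_3$ and hence generates its centre, which contains $b$ and $d$. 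This produces a surjective homomorphism $\phi\colon A(P_4)\twoheadrightarrow H$, where $H=\langle a,b,c,d\rangle\leq\B_4$.

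Next I would pin down the two factors. On the one hand $\langle a,d\rangle=\langle s_1^2,s_3^2\rangle\cong\Z^2$. On the other hand $\langle b,c,d\rangle$ lies in $\langle s_2,s_3\rangle\cong\B_3$; there $c$ generates the centre while $\langle b,d\rangle=\langle s_2^2,s_3^2\rangle$ is, after relabelling strands, the standard rank-two free subgroup $\langle A_{12},A_{23}\rangle$ of the pure braid group $\PB_3\cong F_2\times\Z$. A linking-number computation — the abelianisation $\PB_3\to\Z^3$ — shows $\langle b,d\rangle\cap\langle c\rangle=1$ and $\langle b,c,d\rangle=\langle b,d\rangle\times\langle c\rangle\cong F_2\times\Z$, so that $\phi$ restricts to an isomorphism from the parabolic subgroup of $A(P_4)$ generated by $b,c,d$ onto $\langle b,c,d\rangle$. (For reassurance that nothing collapses, the three non-edges are genuine: forgetting strand $4$ sends $a,b$ to the free generators $A_{12},A_{23}$ of $\PB_3$ and $c$ to $A_{23}$, so $[a,b]\neq1\neq[a,c]$; forgetting strand $1$ shows $[b,d]\neq1$. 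These facts are not logically needed, since injectivity will follow from the amalgam.)

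Now $A(P_4)$ splits as $\langle a,d\rangle*_{\langle d\rangle}\langle b,c,d\rangle\cong\Z^2*_{\Z}(F_2\times\Z)$, amalgamated along $\langle d\rangle$, and by the universal property of amalgamated products $\phi$ is an isomorphism as soon as $H$ admits the matching decomposition $H=\langle a,d\rangle*_{\langle d\rangle}\langle b,c,d\rangle$ inside $\B_4$. To establish this I would pass to the pure braid group $\PB_4\geq H$ and use its fibration $\PB_4=F_3\rtimes\PB_3$ obtained by forgetting one strand (say strand $1$): under it $\langle b,c,d\rangle$ maps faithfully into the base $\PB_3$, while $a=A_{12}$ lies in the free fibre $F_3$ of rank $3$. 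Writing a putative relator $h_1g_1h_2g_2\cdots h_ng_n$ (with $g_i\in\langle a,d\rangle\setminus\langle d\rangle$ and $h_i\in\langle b,c,d\rangle\setminus\langle d\rangle$) in the resulting semidirect-product normal form, its base component is controlled by the known structure of $\langle b,c,d\rangle\cong F_2\times\Z$, and its fibre component is a product of $\PB_3$-conjugates of powers of $a$ in $F_3$. The amalgam condition then reduces to a ping-pong statement for these conjugates inside the free group $F_3$, using the explicit Artin action of $\PB_3$ on $F_3$; in particular one uses that the full twist $c$ carries $a$ to a conjugate by a word genuinely involving the other fibre generators $A_{13},A_{14}$. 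This ping-pong / normal-form verification is the main obstacle — it is the point at which one must actually compute with the pure braid relations — and it is exactly the mechanism used by Droms, Lewin \& Servatius to build tree groups inside $\PB_4$. Once it is in hand, $\phi$ is an isomorphism and $H\cong A(P_4)$.
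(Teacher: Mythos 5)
First, a point of comparison: the paper does not prove this statement at all --- it is imported verbatim as Corollary~1 of Droms, Lewin \& Servatius \cite{Droms1991} and used as a black box, so there is no internal proof to measure yours against. Judged on its own terms, your reconstruction is set up correctly as far as it goes. The verification of the three commutation relations is right (in particular the observation that $c=(s_2s_3s_2)^2=(s_2s_3)^3$ is the full twist generating the centre of the parabolic $\langle s_2,s_3\rangle\cong\B_3$, hence commutes with $b$ and $d$), the identifications $\langle a,d\rangle\cong\Z^2$ and $\langle b,c,d\rangle\cong F_2\times\Z$ are correct (the abelianisation argument for $\langle b,d\rangle\cap\langle c\rangle=1$ works), and the splitting $A(P_4)\cong\Z^2\ast_{\Z}(F_2\times\Z)$ over $\langle d\rangle$ is the standard RAAG decomposition along the induced subgraphs on $\{a,d\}$ and $\{d,c,b\}$. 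So you have correctly reduced the theorem to showing that $H$ realises the matching amalgam inside $\PB_4$.

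The gap is that this last step --- the only step with real content --- is described rather than carried out. Concretely, after pushing an alternating word $h_1g_1\cdots h_ng_n$ through the splitting $\PB_4=F_3\rtimes\PB_3$ (forgetting strand $1$), its fibre component is $\prod_i\bigl(u_i\cdot A_{12}^{m_i}\bigr)$ with $m_i\neq 0$ and $u_i=h_1d^{k_1}\cdots h_i$, and you must show that this product of conjugates of powers of $A_{12}$ cannot vanish in the free group $F_3=\langle A_{12},A_{13},A_{14}\rangle$. The lemma you actually need, and never state or prove, is: if $p\in\langle b,c,d\rangle$ acts on $A_{12}$ by conjugation by a power of $A_{12}$, then $p\in\langle d\rangle$. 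Without it, consecutive factors $u_i\cdot A_{12}^{m_i}$ and $u_{i+1}\cdot A_{12}^{m_{i+1}}$ could merge and cancel even though $h_{i+1}\notin\langle d\rangle$, and the amalgam property would fail. Proving this requires writing down the explicit Artin action of $A_{23}$, $A_{34}$ and the full twist on $F_3$ and running the cancellation argument; that computation is precisely the content of the cited corollary, and labelling it ``the main obstacle'' does not discharge it. Everything up to that point is sound, but as written the proposal is a correct reduction to the hard lemma, not a proof of the theorem.
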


\begin{proof}[Proof of Theorem~\ref{thm:BraidClassification}.]
The group $\B_3$ is isomorphic to the torus knot group $\pres{x,y}{x^2 = y^3}$ (see e.g. \cite[Proof of Theorem 5.1]{CrispParis2005}), which is known to have a finite index subgroup isomorphic to $F_n \times \Z$ (see \cite{Niblo2001}).  
The group $F_n \times \Z$ has decidable rational subset membership problem by \cite[Theorem 6.5]{Kambites2007}. Since having decidable rational subset membership problem is preserved by taking finite index extensions (see \cite[Section~5]{Lohrey2015}) it follows that $\B_3$ also has decidable rational subset membership problem, and hence so does $\B_n$ for all $n \leq 3$.     

If $n \geq 4$ then by Theorem~\ref{Thm:Droms-Ap4-embeds} of Droms, Lewis \& Servatius the group $\B_n$ contains a subgroup isomorphic to the RAAG $A(P_4)$, and by Lohrey \& Steinberg's Theorem~\ref{Thm:LS}
the group $A(P_4)$ contains a fixed finitely generated submonoid in which membership is undecidable. Hence, together with Remark~\ref{rmk:preservation}, it follows that for $n \geq 4$ the braid group $\B_n$ contains a fixed finitely generated submonoid in which membership is undecidable.
\end{proof}

In Section~\ref{Sec:ArtinGroups} we will extend Theorem~\ref{thm:BraidClassification} to general Artin groups, giving the result Theorem~\ref{thm_Artin_in_general} which provides a common generalization of both Theorem~\ref{thm:BraidClassification} and the Theorem~\ref{Thm:LS} of Lohrey \& Steinberg for RAAGs.  

Before turning our attention to general Artin groups, we shall investigate some other decision problems for braid groups.  
Since $\B_3$ has decidable rational subset membership problem it follows that the submonoid membership problem, the subsemigroup intersection problem, group problem, identity problem, and subgroup membership problems are all also decidable in $\B_3$. 
By a classical result of Mikhailova \cite{Mikhailova1958} $A(C_4) \cong F_2 \times F_2$ has undecidable subgroup membership problem (in fact contains a fixed finitely generated subgroup in which membership is undecidable), while Bell \& Potapov \cite{Bell2010} have proved that $A(C_4) \cong F_2 \times F_2$ has undecidable identity problem. It follows from this that the identity problem, group problem, submonoid membership problem, and the semigroup intersection problem are all undecidable in $A(C_4) \cong F_2 \times F_2$. But Makanina \cite{Makanina1981} proved that there exists an embedding $F_2 \times F_2 \leq \B_n$ when $n \geq 5$, and it follows that all of these problems are also undecidable in in $\B_n$ with $n \geq 5$; for the submonoid membership problem this is immediate, and for the identity and group problems this was proved by Potapov in \cite[Theorem 14]{Potapov2013Composition}.    

This leaves the question of decidability of each of:  
the identity problem, group problem, and the semigroup intersection problem in $\B_4$. Since $F_2 \times F_2$ does not embed into $\B_4$ one cannot get undecidability results in the same way as for $\B_n$ with $n \geq 5$. However, we know that $A(P_4)$ does embed in $\B_4$, and in the rest of this section we explore the extent to which this can utilized to gain insights into which of these problems is decidable for the four-strand braid group $\B_4$.    

\subsection{A connection between rational subsets and submonoids}

The rational subset membership problem is undecidable in the right-angled Artin group $A(P_4)$. In this section we will build on ideas of Lohrey \& Steinberg \cite{Lohrey2008}  (cf.\ also \cite[Lemma~6.8]{foniqi2023membership}) to generalize this result somewhat for our purposes.  
The proof of the following lemma follows similar lines of the proof of \cite[Lemma~11]{Lohrey2008}.

\begin{lemma}\label{Lem:reachability-for-RAT1-reduces-to-SMP-reach-xY}
Let $G$ be any finitely generated group, and let $F$ be the free group with basis $\{ x, y, z \}$. Then the following hold: 
\begin{enumerate}[label=(\roman*)]
\item If the fixed-target submonoid membership problem for $xy^{-1}$ in $G \ast F$ is decidable, then the rational subset membership problem in $G$ is decidable.
\item If the semigroup intersection problem is decidable in
$G \ast F$ 
with respect to the subsemigroup $\operatorname{Sgp}(xy^{-1})$, then the rational subset membership problem in $G$ is decidable.
\end{enumerate}
\end{lemma}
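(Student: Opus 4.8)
The plan is to encode an instance of the rational subset membership problem in $G$ as a fixed-target submonoid membership instance (resp.\ a semigroup intersection instance) in $G \ast F$, following the classical idea of using a free letter as a ``clock'' to record transitions of an automaton. Suppose we are given a finite automaton $\cA$ over a generating set $A$ of $G$, with states $Q = \{q_0, \dots, q_k\}$ and a target word $w$, and we wish to decide whether $\pi(w) \in \pi(L(\cA))$. First I would modify $\cA$ so that it has a single final state $q_f$ distinct from $q_0$ (adding $\varepsilon$-transitions if necessary), which does not change the accepted language. The key construction is to assign to each state $q_i$ a distinct element of $F$; the natural choice, to keep everything inside $\operatorname{Sgp}(\cdot)$, is to use the generator $z$ and set $p_i := z^{i}$ (or better, to make cancellation rigid, $p_i := z^i x z^{-i}$ or similar — the precise gadget is a routine choice). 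For each transition $q_i \xrightarrow{a} q_j$ of $\cA$ (with $a \in A \cup \{\varepsilon\}$), I would include in the finite set $X$ the word $p_i^{-1}\, a\, p_j$, viewed as an element of $G \ast F$. I would also include one ``closing'' generator that links the start and final states to the free letters $x, y$: concretely a generator of the form $p_0$ (or $p_0$ times a suitable power of $z$) on the left and $w^{-1} p_f$-type data on the right, arranged so that a product of elements of $X$ collapses, after free cancellation in $G \ast F$, to exactly $xy^{-1}$ precisely when the corresponding path in $\cA$ is a successful run spelling a word equal to $\pi(w)$ in $G$.

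The heart of the argument is the claim that, with $X$ chosen this way, $xy^{-1} \in \operatorname{Mon}(X)$ in $G \ast F$ if and only if $\pi(w) \in \pi(L(\cA))$. The forward direction is the substantive one: given a word $\omega \in X^\ast$ representing $xy^{-1}$, I would analyse its image in $G \ast F$ using the normal form for free products. The $F$-syllables coming from the $p_i$ factors must telescope — a transition gadget ending in $p_j$ must be followed by one beginning in $p_i^{-1}$ with $i = j$, since otherwise an uncancelled power of $z$ survives and the product cannot equal the single syllable $xy^{-1}$ (which has $G$-syllables trivial and $F$-syllables exactly $x$ then $y^{-1}$). This forces the sequence of gadgets used to trace out a genuine path in $\cA$ from $q_0$ to $q_f$, and the surviving $G$-part of the product is exactly $\pi$ of the word this path spells, which must therefore equal $\pi(w)$; hence that word lies in $L(\cA)$ and $\pi(w) \in \pi(L(\cA))$. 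The reverse direction is easy: an accepting run of $\cA$ on a word $u$ with $\pi(u) = \pi(w)$ gives, by reading off the corresponding gadgets from $X$, a product equal to $xy^{-1}$. For part~(ii), I would instead split $X$ into the ``transition'' part and observe that the intersection $\operatorname{Sgp}(X') \cap \operatorname{Sgp}(xy^{-1})$ being nonempty in $G \ast F$ is equivalent to some positive power $(xy^{-1})^n$ being reachable; by a small bookkeeping modification (e.g.\ allowing the clock to reset, or concatenating runs) one reduces membership of $xy^{-1}$ itself, and hence the same equivalence with $L(\cA)$ goes through. Since the rational subset membership problem in $G$ is the uniform problem over all such $\cA$ and $w$, decidability of the fixed-target problem for $xy^{-1}$ (resp.\ the semigroup intersection problem with respect to $\operatorname{Sgp}(xy^{-1})$) in $G \ast F$ yields decidability of the rational subset membership problem in $G$.

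The main obstacle I anticipate is getting the free-product normal-form bookkeeping exactly right so that \emph{only} the intended cancellations are possible: one must ensure that the clock gadgets $p_i$ are chosen so that no unintended splicing of gadgets can produce $xy^{-1}$ (this is why using conjugates like $z^i x z^{-i}$, or interleaving the letters $x$ and $y$ carefully, is safer than naive powers of a single generator), and one must handle $\varepsilon$-transitions and the possibility of empty or length-one products without creating spurious solutions. A secondary technical point, specific to part~(ii), is that the semigroup intersection problem is stated with respect to the fixed subsemigroup $\operatorname{Sgp}(xy^{-1})$, so the reduction must produce its other input $\operatorname{Sgp}(X')$ in such a way that nonempty intersection pins down a single element $xy^{-1}$ rather than an arbitrary power; this is the only place where part~(ii) genuinely differs from part~(i), and it should be dispatched with the kind of ``make the run idempotent'' trick used in \cite[Lemma~11]{Lohrey2008}.
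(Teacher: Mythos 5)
Your proposal is correct and follows essentially the same route as the paper's proof (itself modelled on \cite[Lemma~11]{Lohrey2008}): encode states as the free basis $x, y, z^i x z^{-i}$ of a free subgroup of $F$, turn transitions into gadgets $\widetilde{p}\,\sigma\,\widetilde{q}^{-1}$, and use the free-product normal form to force telescoping, the only cosmetic difference being that the paper absorbs the target $w$ by the standard reduction $u \in \pi(R) \iff 1 \in \pi(u^{-1}R)$ rather than via a closing gadget. For part~(ii) the paper makes your ``concatenating runs'' remark precise by proving the key claim for arbitrary powers $(xy^{-1})^M$ by induction on the number of gadget factors, which is exactly the bookkeeping you anticipate.
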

\begin{proof}
Let $A$ be a finite generating set for $G$, and let $X = \{ x, y, z\}$, assuming without loss of generality that $A \cap X = \varnothing$. Let $\Sigma = A \cup X$, and let $\oSigma = \Sigma \cup \Sigma^{-1}$. Let $\pi \colon \oSigma^\ast \to G \ast F$ be the canonical surjective homomorphism. Let $R \subseteq \oSigma^\ast$ be any regular language, and let $\cA$ be a finite state automaton over $\oSigma$ such that $L(\cA) = R$, allowing $\varepsilon$-edges; in particular, we may assume without loss of generality that $\cA = (Q, \oSigma, \delta, q_0, q_f)$ has a single initial state $q_0 \in Q$ and a single accepting state $q_f \in Q$, where $q_0 \neq q_f$. Let
\[
\widetilde{Q} = \{ x, y, z^i x z^{-i} \: (1 \leq i \leq |Q|-2) \}
\]
which is a finite set of size $|Q|$. We fix a bijection between the two sets $Q$ and $\widetilde{Q}$ such that $q_0 \mapsto x$ and $q_f \mapsto y$. For $q \in Q$, let $\widetilde{q}$ denote the image of $q$ in $\widetilde{Q}$. Clearly, $\widetilde{Q}$ generates a free subgroup of $F$ of rank $|Q|$.

The idea is now to encode the transitions of the automaton $\cA$ into elements of $G \ast F$. To do this, we define the (finite) set 
\[
\Delta = \{ \widetilde{p} \sigma \widetilde{q}^{-1} \mid p \xrightarrow{\sigma} q \text{ is a transition in $\cA$} \}
\]
and we will prove that 
\begin{equation}\label{Eq:delta-simulates-A}
1 \in \pi(R) \iff \pi(xy^{-1}) \in \operatorname{Mon}(\Delta).
\end{equation}
Thus, proving that \eqref{Eq:delta-simulates-A} holds clearly suffices to establish part (i) of the lemma, for deciding whether $1 \in \pi(R)$ for any regular language $R$ is equivalent to solving the rational subset membership problem, as $u \in \pi(R)$ if and only if $1 \in u^{-1}\pi(R)$ where $u^{-1}\pi(R)$ is a rational subset. Similarly, we will also prove that 
\begin{equation}\label{Eq:semigroup-intersection-eq}
\operatorname{Sgp}(xy^{-1}) \cap \operatorname{Sgp}(\Delta) \neq \varnothing \iff \pi(xy^{-1}) \in \operatorname{Mon}(\Delta).
\end{equation}
This second part clearly implies part (ii) of the lemma. Indeed, taking equations \eqref{Eq:delta-simulates-A} and \eqref{Eq:semigroup-intersection-eq} together it follows that deciding    
$\operatorname{Sgp}(xy^{-1}) \cap \operatorname{Sgp}(\Delta) \neq \varnothing$ is equivalent to deciding 
$1 \in \pi(R)$ which, as explained in the previous paragraph, is equivalent to solving the rational subset membership problem

To prove \eqref{Eq:delta-simulates-A} and \eqref{Eq:semigroup-intersection-eq}, and thereby establish the lemma, we will rely on the following claim, which generalizes part of  
\cite[Claim~1, proof of Lemma~11]{Lohrey2008}.
We shall see that this claim will be sufficient to establish both \eqref{Eq:delta-simulates-A} and \eqref{Eq:semigroup-intersection-eq}:

\begin{claim*}
Suppose that in $G \ast F$ the equality 
\begin{equation}\label{Eq:free-product-expression-xyM}
(xy^{-1})^M = (\widetilde{p_1} v_1 \widetilde{q_1}^{-1}) (\widetilde{p_2} v_2 \widetilde{q_2}^{-1}) \cdots (\widetilde{p_n} v_n \widetilde{q_n}^{-1})
\end{equation}
holds for some $M \geq 1$, where $p_i \xrightarrow{v_i} q_i$ is a path in $\cA$ for all $1 \leq i \leq n$. Then we have $1 \in \pi(R)$. 
\end{claim*}
\begin{proof}[Proof of Claim.]
The proof will use a similar induction on $n$ as in the proof of \cite[Claim~1, proof of Lemma~11]{Lohrey2008}. If $n=1$, then we have $(xy^{-1})^M = \widetilde{p_n} v_n \widetilde{q_n}^{-1}$ in $G \ast F$, which by the normal form for free products clearly implies that $M=1$, which is the case dealt with in \cite{Lohrey2008}. 

Thus, assume $n>1$, and that the claim holds for $n-1$. There are several subcases to consider. First, suppose $q_i = p_{i+1}$ for some $i$. In this case, we can cancel these terms, and since we then have a path $p_i \xrightarrow{v_i v_{i+1}} q_{i+1}$, we can apply induction. On the other hand, if $q_i = p_i$ and $v_i = 1$ for some $1 \leq i \leq n$, then we can of course eliminate $\widetilde{p_i} v_i \widetilde{q_i}^{-1}$ from \eqref{Eq:free-product-expression-xyM} and obtain a shorter expression, and we are done by induction. Thus, the last case to consider is that $q_i \neq p_{i+1}$ and that $p_i = q_i$ implies $v_i \neq 1$ for all $1 \leq i \leq n$. Said otherwise, this says that the right-hand side of \eqref{Eq:free-product-expression-xyM} is in normal form, and hence, since the left-hand side is also in normal form, we must have $n= M$, $v_i = 1$ in $G$, as well as $\widetilde{p_i} = x$ and $\widetilde{q_i} = y$ for all $1 \leq i \leq n$. In particular, $p_1 = q_0$ and  $q_1 = q_f$, and since $p_1 \xrightarrow{v_1} q_1$, we have $q_0 \xrightarrow{1} q_f$. In other words, we have $1 \in L(\cA)$, as required. 
\end{proof}

Having proved the Claim, we now first establish that \eqref{Eq:delta-simulates-A} holds. Indeed, let us suppose first that $1 \in \pi(R)$, and let $w \in R$ be some word such that $\pi(w) = 1$. We can then write $w \equiv \sigma_1 \sigma_2 \cdots \sigma_n$ such that
\[
q_0 \xrightarrow{\sigma_1} q_1  \xrightarrow{\sigma_2} q_2 \xrightarrow{\sigma_3} \cdots \xrightarrow{\sigma_{n-1}} q_{n-1} \xrightarrow{\sigma_n} q_n
\]
is an accepting path in $\cA$, where $\sigma_i \in \oSigma \cup \{ \varepsilon \}$ for all $1 \leq i \leq n$. Thus we have 
\begin{align*}
\pi(xy^{-1}) &= \pi(xwy^{-1}) = \pi \left( \widetilde{q_0} ( \sigma_1 \sigma_2 \cdots \sigma_n) \widetilde{q_f} \right) \\
&= \pi\left( (\widetilde{q_0} \sigma_1  \widetilde{q_1}^{-1}) (\widetilde{q_1} \sigma_2  \widetilde{q_2}^{-1}) \cdots ( \widetilde{q}_{n-1} \sigma_n \widetilde{q_f}^{-1}) \right) \in \pi(\Delta^\ast),
\end{align*}
as desired. To prove the other direction, suppose that $\pi(xy^{-1}) \in \pi(\Delta^\ast)$. Then in $G \ast F$ we have the equality
\[
xy^{-1} = \widetilde{q_0} \widetilde{q_f}^{-1} =  (\widetilde{q_0} \sigma_1  \widetilde{q_1}^{-1})\ (\widetilde{q_1} \sigma_2  \widetilde{q_2}^{-1}) \cdots ( \widetilde{q}_{n-1} \sigma_n \widetilde{q_f}^{-1}) 
\]
where $\widetilde{q_i} \sigma_i  \widetilde{q_i} \in \Delta$ for all $1 \leq i \leq n$. Hence, by the Claim, we must have $1 \in \pi(R)$. This completes the proof of the equivalence \eqref{Eq:delta-simulates-A}, and hence also part (i) of the lemma.

Let us now prove the equivalence \eqref{Eq:semigroup-intersection-eq}, which will complete the proof of part (ii) of the lemma, and thereby also the entire lemma. If $\pi(xy^{-1}) \in \Mon(\Delta)$, then clearly as $xy^{-1} \neq 1$ in $G \ast F$, we have that $\pi(xy^{-1}) \in \Sgp(\Delta)$, and hence $\Sgp(xy^{-1}) \cap \Sgp(\Delta) \neq \varnothing$ as required. Suppose next, for the converse, that $\Sgp(\Delta) \cap \Sgp(xy^{-1}) \neq \varnothing$. Then some power of $xy^{-1}$ lies in $\Sgp(\Delta)$, i.e.\
\[
(xy^{-1})^M = (\widetilde{p_1} v_1 \widetilde{q_1}^{-1}) (\widetilde{p_2} v_2 \widetilde{q_2}^{-1}) \cdots (\widetilde{p_n} v_n \widetilde{q_n}^{-1})
\]
in $G \ast F$, where the $p_i \xrightarrow{a_i} q_i$ are paths in $\cA$. By the above claim, we have $1 \in \pi(L(\A))$. Hence, by \eqref{Eq:delta-simulates-A}, we have that $\pi(xy^{-1}) \in \Mon(\Delta)$. This completes the proof of \eqref{Eq:semigroup-intersection-eq}, and hence also the proof of the lemma. 
\end{proof}

\subsection{Further undecidability in $A(P_4)$}

In this section we shall state and prove several undecidability results for the right-angled Artin group $A(P_4)$ that are needed to prove the results about the braid group $\B_4$ in the next section. We fix some notation throughout this section. We fix the following presentation for $A(P_4)$: 
\begin{equation}
A(P_4) = \pres{a,b,c,d}{ab=ba, bc=cb, cd=dc} \label{A(P4)-presentation}
\end{equation}
In order to apply Lemma~\ref{Lem:reachability-for-RAT1-reduces-to-SMP-reach-xY} to the group $A(P_4)$ we first recall (Theorem~\ref{Thm:LS}) that the rational subset membership problem in $A(P_4)$, and indeed in any group containing a copy of $A(P_4)$, is undecidable. We now show undecidability of both the fixed-target submonoid membership problem, and the semigroup intersection problem, in the monoid $A(P_4)$; this proposition will then be used to yield our undecidability results in $\B_4$.   

To prove the proposition we shall need the following result which is a special case of results proved in \cite{Droms1991}. For the convenience of the reader we provide the details on how to deduce this lemma by applying results from \cite{Droms1991}. 

\begin{lemma}\label{lem:BiInfinite} Let
\[ G = A(P_4) = \pres{a,b,c,d}{ab=ba, bc=cb, cd=dc}. \]
For $i \geq 0$ set
\begin{align*}
  \alpha_i &= (ad)^i(aca^{-1})(ad)^{-i}, &   \beta_i &= (ad)^ib(ad)^{-i}, \\
  \mu_i &= (da)^ic(da)^{-i}, & \delta_i &= (da)^i(d b d^{-1})(da)^{-i}. 
\end{align*}
Then these elements are all distinct in $G$ and the subgroup $H = \langle X \rangle$ of $G$ generated by  
\[X = \{\alpha_i, \; \beta_i, \; \mu_i, \; \delta_i : i \geq 0 \}\]
is isomorphic to the RAAG $A(P_\infty)$ where $P_\infty$ is the bi-infinite line

\[
\begin{tikzpicture}[thick,scale=0.8]
\tikzstyle{lightnode}=[circle, draw, fill=black!20,
                        inner sep=0pt, minimum width=4pt]
\tikzstyle{darknode}=[circle, draw, fill=black!99,
                        inner sep=0pt, minimum width=4pt]
\draw[dotted] (-1,0) {} --(0,0);
\draw[dotted] (11,0) {} --(12,0);
\draw (0,0) node[lightnode] {} --(1,0) node[lightnode] {};
\draw (1,0) node[lightnode] {} --(2,0) node[lightnode] {};
\draw (2,0) node[lightnode] {} --(3,0) node[lightnode] {};
\draw (3,0) node[lightnode] {} --(4,0) node[lightnode] {};
\draw (4,0) node[lightnode] {} --(5,0) node[lightnode] {};
\draw (5,0) node[lightnode] {} --(6,0) node[lightnode] {};
\draw (6,0) node[lightnode] {} --(7,0) node[lightnode] {};
\draw (7,0) node[lightnode] {} --(8,0) node[lightnode] {};
\draw (8,0) node[lightnode] {} --(9,0) node[lightnode] {};
\draw (9,0) node[lightnode] {} --(10,0) node[lightnode] {};
\draw (10,0) node[lightnode] {} --(11,0) node[lightnode] {};
\draw (0,0) node[above] {$\alpha_2$};
\draw (1,0) node[above] {$\beta_2$};
\draw (2,0) node[above] {$\alpha_1$};
\draw (3,0) node[above] {$\beta_1$};
\draw (4,0) node[above] {$\alpha_0$};
\draw (5,0) node[above] {$\beta_0$};
\draw (6,0) node[above] {$\mu_0$};
\draw (7,0) node[above] {$\delta_0$};
\draw (8,0) node[above] {$\mu_1$};
\draw (9,0) node[above] {$\delta_1$};
\draw (10,0) node[above] {$\mu_2$};
\draw (11,0) node[above] {$\delta_2$};
\end{tikzpicture}
\]
\vspace{1mm}
\end{lemma}
\begin{proof} 
Since the words $\alpha_i$ and $\mu_i$ have $c$-exponent sum $1$ and the words $\beta_i$ and $\delta_i$ have $c$-exponent sum zero it follows that no word from $\{\alpha_i, \mu_i : i \geq 0 \}$ is equal in $G$ to any word from 
$\{\beta_i, \delta_i : i \geq 0 \}$. 
Note that all the words in the set 
$\{\alpha_i, \mu_i : i \geq 0 \}$ 
represent elements in the subgroup \[\langle a,d,c \rangle \cong 
\langle a \rangle \ast \langle c, d \rangle
\cong \mathbb{Z} \ast (\mathbb{Z} \times \mathbb{Z}) \]     
of $G$. Using  
normal forms in this free product
it then follows that  
distinct words from the set 
$\{\alpha_i, \mu_i : i \geq 0 \}$ 
represent distinct elements of $G$. 
A similar argument working in $\langle a,b,d\rangle \cong (\mathbb{Z} \times \mathbb{Z}) \ast \mathbb{Z}$ shows that  
distinct words from the set 
$\{\beta_i, \delta_i : i \geq 0 \}$
represent distinct elements of $G$. 
Hence the words in the set $X$ all represent distinct elements of the group $G$.   

Clearly $\alpha_0 = aca^{-1}$ commutes with $\beta_0 = b$, which upon conjugation by $(ad)^i$ implies that $\alpha_i$ commutes with $\beta_i$ for all $i$. Also $\alpha_i$ commutes with $\beta_{i+1}$ since $c$ commutes with $dbd^{-1}$. Conjugating these pairs by $d$ implies that $\mu_i$ commutes with $\delta_i$, and that $\mu_{i+1}$ commutes with $\delta_i$. Together with the fact that $\beta_0 = b$ commutes with $\mu_0=c$ this shows that words labelling   adjacent vertices in $P_{\infty}$ commute in $G$.  

Now let $N$ be the normal closure in $G$ of the subset  $\{b,c\} \subseteq G$. It follows from \cite[Theorem~2]{Droms1991} (see also the second paragraph of the proof of Theorem~3 in \cite{Droms1991}) that $N$ is isomorphic to a RAAG with generators being the set of distinct conjugates $C$ of $b$ and $c$ in $G$ by elements from the subgroup $\langle a,d \rangle \leq G$, and where the defining relators for the RAAG $N$ are given by commutators for each pair of elements from $C$ that commute with each other in $G$. It is also proved in \cite[Theorem~2 and Theorem~3]{Droms1991} that the underlying graph $\Phi$ of the RAAG $N$ is a forest. By definition all of the elements from $X$ belong to the set $C$. Hence, the subgroup $H$ of $G$ generated by $X$ is in fact the subgroup of $A(\Phi)$ generated by $X$. Since $X \subseteq \Phi$, it thus follows that $H$ is isomorphic to the RAAG on the subgraph of $\Phi$ induced by $X$, i.e.\ $H$ is a \textit{special} subgroup of $A(\Phi)$, see e.g.\ \cite[Lemma~3.2]{Vogtmann2015}. That is, $H$ is isomorphic to the RAAG with generating set $X$ and defining relations given by commutators for every pair of elements from $X$ that commute in $A(\Phi)$. Since $\Phi$ is a forest it follows that the only commutation relations between pairs of elements in the set $X$ are those corresponding to edges in the graph $P_\infty$, since any additional edge would create a cycle. This completes the proof that  $H \cong A(P_\infty)$.      
\end{proof}

\begin{proposition}\label{Prop:newUndec-for-P4}
Let $G=A(P_4)$ be the right-angled Artin group \eqref{A(P4)-presentation}. Then:
\begin{enumerate}
\item[(i)] The fixed-target submonoid membership problem for $aca^{-1}c^{-1} \in A(P_4)$ is undecidable. 
\item[(ii)] The semigroup intersection problem is undecidable; indeed there is no algorithm that takes a finite set $X \subseteq A(P_4)$ and decides whether or not $\mathrm{Sgp}( X ) \cap \mathrm{Sgp}( aca^{-1}c^{-1} ) = \varnothing$.
\end{enumerate}
\end{proposition}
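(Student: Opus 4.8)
The plan is to deduce both parts from Lemma~\ref{Lem:reachability-for-RAT1-reduces-to-SMP-reach-xY} and the undecidability of the rational subset membership problem in $A(P_4)$ (Theorem~\ref{Thm:LS}), transporting undecidability between $A(P_4)$ and a suitable subgroup by means of Remark~\ref{rmk:preservation}. First I would pin down the target element as a product of two free generators: since $a$ and $c$ are non-adjacent in $P_4$, the subgroup $\langle a,c\rangle$ is free of rank $2$, and inside it $aca^{-1}$ and $c$ are two distinct members of a free basis of $\normal{c}$, so that $x:=aca^{-1}$ and $y:=c$ generate a free subgroup of rank $2$ with $xy^{-1}=aca^{-1}c^{-1}$.

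The crucial step is then to realise inside $A(P_4)$ a subgroup $H\cong G\ast F$, where $F$ is free of rank $3$ on $\{x,y,z\}$ with $x,y$ as above (so that $xy^{-1}=aca^{-1}c^{-1}$) and $G$ is a finitely generated group with undecidable rational subset membership problem. By Theorem~\ref{Thm:LS} it suffices that $G$ contain a copy of $A(P_4)$, so I would take for $G$ a copy of $A(P_4)$ sitting inside $A(P_4)$ (necessarily a proper one, since $G\ast F$ is freely decomposable while $A(P_4)$ is not), choose $z$ so that $\langle x,y,z\rangle$ is free of rank $3$, and verify that $G$ together with $\langle x,y,z\rangle$ generates the internal free product $G\ast\langle x,y,z\rangle$. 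I expect this to be the main obstacle. The difficulty is genuine: the central‑type vertex $b$ commutes with both $a$ and $c$, and $d$ commutes with $c$, so $b$ and $d$ commute with $x$ and $y$; hence $G$ and $z$ must be assembled from conjugates that are in ``general position'' relative to $\langle x,y\rangle$, and one has to check, using normal forms for right‑angled Artin groups (or a ping‑pong argument on the universal cover of the Salvetti complex), that no relations beyond those of the free product survive, so that the subgroup obtained is genuinely $G\ast\langle x,y,z\rangle$ and not a proper quotient of it.

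Granting the construction of such an $H\cong G\ast F\le A(P_4)$, part (i) is immediate: since the rational subset membership problem of $G$ is undecidable, Lemma~\ref{Lem:reachability-for-RAT1-reduces-to-SMP-reach-xY}(i) gives that the fixed‑target submonoid membership problem for $xy^{-1}$ in $G\ast F$ is undecidable, and since $xy^{-1}=aca^{-1}c^{-1}\in H$ and $H$ is a finitely generated subgroup of $A(P_4)$, Remark~\ref{rmk:preservation} yields undecidability of the fixed‑target submonoid membership problem for $aca^{-1}c^{-1}$ in $A(P_4)$. Part (ii) follows by the same argument with Lemma~\ref{Lem:reachability-for-RAT1-reduces-to-SMP-reach-xY}(ii) in place of (i): it gives that the semigroup intersection problem in $G\ast F$ is undecidable already with respect to the singly generated subsemigroup $\Sgp(xy^{-1})=\Sgp(aca^{-1}c^{-1})$, and transporting this along the inclusion $H\le A(P_4)$ via Remark~\ref{rmk:preservation} shows that there is no algorithm which, on input a finite set $X\subseteq A(P_4)$, decides whether $\Sgp(X)\cap\Sgp(aca^{-1}c^{-1})=\varnothing$. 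This proves both statements.
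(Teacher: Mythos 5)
Your overall architecture is exactly that of the paper: set $x=aca^{-1}$, $y=c$ (so $xy^{-1}=aca^{-1}c^{-1}$), exhibit a subgroup of $A(P_4)$ isomorphic to $G\ast F_3$ with $G$ containing a copy of $A(P_4)$ and with $x,y$ among the free generators of the $F_3$ factor, then apply Lemma~\ref{Lem:reachability-for-RAT1-reduces-to-SMP-reach-xY}, Theorem~\ref{Thm:LS} and Remark~\ref{rmk:preservation}. The two reduction steps at the end are carried out correctly. However, the one substantive step --- actually constructing the internal free product --- is precisely the step you flag as ``the main obstacle'' and then skip with ``granting the construction''. As it stands this is a genuine gap: everything else in the argument is routine bookkeeping, and without the embedding $A(P_4)\ast F_3\hookrightarrow A(P_4)$ positioned so that $aca^{-1}$ and $c$ lie in a basis of the free factor, neither (i) nor (ii) is proved. (A small side remark: your heuristic claim that ``$b$ and $d$ commute with $x$ and $y$'' is not quite right --- $d$ does not commute with $aca^{-1}$ --- though nothing rests on it.)

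The missing ingredient is supplied in the paper by the Droms--Lewin--Servatius description of subgroups of $A(P_4)$ (a RAAG $A(\Gamma)$ embeds in $A(P_4)$ iff $\Gamma$ is a finite forest), and more concretely by the fact that the normal closure of $\{b,c\}$ in $A(P_4)$ is the right-angled Artin group on a tree containing a bi-infinite induced path whose vertices are conjugates of $b$ and $c$, with $\dots,\,aca^{-1},\,b,\,c,\,dbd^{-1},\,\dots$ appearing as consecutive vertices. Given this, no ping-pong or normal-form computation is needed: one simply chooses an induced subgraph of that path consisting of four consecutive vertices $v_1,\dots,v_4$ far to the right of $dbd^{-1}$ together with the three pairwise non-adjacent vertices $z,\,aca^{-1},\,c$ (where $z$ lies two steps to the left of $aca^{-1}$). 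Since a full subgraph of the tree induces the corresponding RAAG as a subgroup, and the chosen vertex set induces the disjoint union of a $P_4$ and three isolated vertices, the subgroup $\langle v_1,\dots,v_4\rangle\ast\langle aca^{-1},c,z\rangle\cong A(P_4)\ast F_3$ is obtained for free, with $x=aca^{-1}$ and $y=c$ exactly where Lemma~\ref{Lem:reachability-for-RAT1-reduces-to-SMP-reach-xY} needs them. If you add this construction (or an equivalent ping-pong verification), your proof is complete and coincides with the paper's.
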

\begin{proof} 
Let 
\[
G = A(P_4) = \pres{a,b,c,d}{ab=ba, bc=cb, cd=dc}.  
\]
Throughout the proof we use the same notation as in the statement of Lemma~\ref{lem:BiInfinite}.
%
Let $F = \langle \alpha_1, \alpha_0, \mu_0 \rangle \leq G$ and let  
$L = \langle \mu_1, \delta_1, \mu_2, \delta_2 \rangle \leq G$.  
It follows from Lemma~\ref{lem:BiInfinite} that $F$ is isomorphic to a free group of rank 3 with basis   
$\{ \alpha_1, \alpha_0, \mu_0  \}$.
It also follows from Lemma~\ref{lem:BiInfinite} that $L$ is isomorphic to the group $A(P_4)$, so
by Theorem~\ref{Thm:LS} the group $L$ has undecidable rational subset membership problem. 

It then follows from Lemma~\ref{Lem:reachability-for-RAT1-reduces-to-SMP-reach-xY}(i) that the fixed-target submonoid membership problem for $\alpha_0\mu_0^{-1}$ in $L \ast F$ is undecidable.
Indeed, if the fixed-target submonoid membership problem for $\alpha_0\mu_0^{-1}$ in $L \ast F$ were decidable then by 
Lemma~\ref{Lem:reachability-for-RAT1-reduces-to-SMP-reach-xY}(i) it would follow that the rational subset membership problem in $L$ is decidable, which it is not as $L \cong A(P_4)$.   

Now let 
\[
Y = \{\alpha_1, \alpha_0, \mu_0, \mu_1, \delta_1, \mu_2, \delta_2 \} \subseteq G 
\]
and let $K = \langle Y \rangle \leq G$ be the subgroup of $G$ generated by $Y$.  
Then by Lemma~\ref{lem:BiInfinite} we have 
\begin{eqnarray*}
  K & = & \langle Y \rangle = \langle \alpha_1, \alpha_0, \mu_0, \mu_1, \delta_1, \mu_2, \delta_2 \rangle \\
    & \cong & \langle  \alpha_1, \alpha_0, \mu_0 \rangle \ast \langle \mu_1, \delta_1, \mu_2, \delta_2 \rangle \cong  
L \ast F.
\end{eqnarray*}
Since by the previous paragraph the fixed-target submonoid membership problem for $\alpha_0\mu_0^{-1}$ in $L \ast F$ is undecidable, and $K \cong L \ast F$ vie the isomorphism described above it follows that the fixed-target submonoid membership problem for $\alpha_0\mu_0^{-1}$ in $K$ is undecidable.    

Note that by definition $\alpha_0 = aca^{-1}$ and $\mu_0 = c$ hence $\alpha_0\mu_0^{-1} = aca^{-1}c^{-1}$ in $G$.      
Since the fixed-target submonoid membership problem for $\alpha_0\mu_0^{-1}$ in $K$ is undecidable,
and since $K$ is a finitely generated subgroup of $G$,   
it then follows by the contrapositive of Remark~\ref{rmk:preservation} that the fixed-target submonoid membership problem for
$\alpha_0\mu_0^{-1}$ in $G$ is undecidable. 
But since $\alpha_0\mu_0^{-1} = aca^{-1}c^{-1}$ in $G$ this means that the fixed-target submonoid membership problem for
$aca^{-1}c^{-1}$ in $G$ is undecidable. This completes the proof of part (i) of the proposition.  

The proof of part (ii) of the proposition is dealt with in a similar way but by applying Lemma~\ref{Lem:reachability-for-RAT1-reduces-to-SMP-reach-xY}(ii). The details are as follows. 

Maintaining the same notation as earlier in the current proof, 
it follows from Lemma~\ref{Lem:reachability-for-RAT1-reduces-to-SMP-reach-xY}(ii) that 
the semigroup intersection problem in $L \ast F$ with respect to the subsemigroup $\operatorname{Sgp}(\alpha_0\mu_0^{-1})$ is undecidable. Indeed, if the 
semigroup intersection problem were decidable in $L \ast F$ with respect to the subsemigroup $\operatorname{Sgp}(\alpha_0\mu_0^{-1})$ then by Lemma~\ref{Lem:reachability-for-RAT1-reduces-to-SMP-reach-xY}(ii) it would follow that the rational subset membership problem in $L$ is decidable, which it is not as $L \cong A(P_4)$.   
Then since 
\begin{eqnarray*}
  K = \langle Y \rangle \cong \langle  \alpha_1, \alpha_0, \mu_0 \rangle \ast \langle \mu_1, \delta_1, \mu_2, \delta_2 \rangle \cong  
L \ast F
\end{eqnarray*}
it follows that the semigroup intersection problem in $K$ with respect to the subsemigroup $\operatorname{Sgp}(\alpha_0\mu_0^{-1})$ is undecidable.

Since the semigroup intersection problem in $K$ with respect to the subsemigroup $\operatorname{Sgp}(\alpha_0\mu_0^{-1})$ is undecidable and since $K$ is a finitely generated subgroup of $G$, it then follows by the contrapositive of Remark~\ref{rmk:preservation} that the semigroup intersection problem in $G$ with respect to the subsemigroup $\operatorname{Sgp}(\alpha_0\mu_0^{-1})$ is undecidable. 
In more detail suppose, seeking a contradiction, that the semigroup intersection problem in $G$ with respect to the subsemigroup $\operatorname{Sgp}(\alpha_0\mu_0^{-1})$ \emph{is} decidable. This would mean that there is an algorithm that for any finite set of words $W \subseteq \{a,b,c,d, a^{-1}, b^{-1}, c^{-1}, d^{-1} \}^*$ can decide whether or not
\[
\operatorname{Sgp}(\alpha_0\mu_0^{-1}) \cap \operatorname{Sgp}(W) = \varnothing.  
\]    
But then in particular given any finite set of words $U \subseteq (Y \cup Y^{-1})^*$ 
we can rewrite these words via the substitutions  
\begin{align*}
  \alpha_i &\mapsto (ad)^i(aca^{-1})(ad)^{-i}, &   \beta_i & \mapsto (ad)^ib(ad)^{-i}, \\
  \mu_i & \mapsto (da)^ic(da)^{-i}, & \delta_i & \mapsto (da)^i(d b d^{-1})(da)^{-i} 
\end{align*}
to a finite set of words $U' \subseteq \{a,b,c,d, a^{-1}, b^{-1}, c^{-1}, d^{-1} \}^*$ representing the same set of elements of $G$ as $U$. Then by assumption we can decide whether or not 
\[
\operatorname{Sgp}(\alpha_0\mu_0^{-1}) \cap \operatorname{Sgp}(U') = \varnothing \ \mbox{in $G$.}  
\] 
But since $\operatorname{Sgp}(U') \leq K$ and  
$\operatorname{Sgp}(\alpha_0\mu_0^{-1}) \leq K$ it follows that 
\[
\operatorname{Sgp}(\alpha_0\mu_0^{-1}) \cap \operatorname{Sgp}(U') = \varnothing \ \mbox{in $G$} 
\]
if and only if  
\[
\operatorname{Sgp}(\alpha_0\mu_0^{-1}) \cap \operatorname{Sgp}(U) = \varnothing \ \mbox{in $K$} 
\]
where $U$ is the original finite subset $U \subseteq (Y \cup Y^{-1})^*$ of words over the generators of $K$. 
This shows that there is an algrothm that takes any finite subset $U \subseteq (Y \cup Y^{-1})^*$ and decides whether or not 
$\operatorname{Sgp}(\alpha_0\mu_0^{-1}) \cap \operatorname{Sgp}(U) = \varnothing$
in $K = \langle Y \rangle$. 
This gives the required contradiction, since the semigroup intersection problem in $K$ with respect to the subsemigroup $\operatorname{Sgp}(\alpha_0\mu_0^{-1})$ is undecidable.  

%
%
%

Finally, since $\alpha_0\mu_0^{-1} = aca^{-1}c^{-1}$ in $G$ this means that the 
semigroup intersection problem in $G$ with respect to the subsemigroup $\operatorname{Sgp}(aca^{-1}c^{-1})$ is undecidable.
This completes the proof of part (ii) of the proposition.  
\end{proof}

\subsection{Undecidability in the braid group $\B_4$}\label{Sec:undecidability-in-b4}

In this section we state and prove several undecidability results about the braid group $\B_4$. Each of the results will follow from the corresponding result for the group $A(P_4)$ proved in \S\ref{Sec:AP4-undec} together with  
 the fact (Theorem~\ref{Thm:Droms-Ap4-embeds}) that $A(P_4)$ embeds as a subgroup of $\B_4$ and the fact (see Remark~\ref{rmk:preservation}) that decidability of each of these problems is inherited by finitely generated subgroups of groups.  

\begin{theorem}\label{Thm:Undecidability-b4}
Let $\B_4$ be the braid group on four strands. Then in $\B_4$:
\begin{enumerate}
\item[(i)] The submonoid membership problem is undecidable; indeed, there is a fixed finite subset $X \subseteq \B_4$ such that there is no algorithm deciding membership in $\operatorname{Mon}(X) \leq \B_4$. 
\item[(ii)] The fixed-target submonoid membership problem for $\gamma_0 \in \B_4$ is undecidable, where $\gamma_0$ is the braid in Figure~\ref{Fig:Undecidable-braid}, that is, there is no algorithm that takes a finite set of braids $X \subseteq \B_4$ and decides whether or not $\gamma_0 \in \mathrm{Mon}( X )$. 
\item[(iii)] The subsemigroup intersection problem is undecidable; indeed, there is no algorithm that takes a finite set of braids $X \subseteq \B_4$ and decides whether or not $\mathrm{Sgp}( X ) \cap \mathrm{Sgp}( \gamma_0 ) = \varnothing$. \end{enumerate}
\begin{figure}[h]
\begin{tikzpicture}
\braid[rotate=90, number of strands=4,
       style strands={5,6}{draw=none}] (braid) a_2 a_2 a_1 a_1 a_2^{-1} a_2^{-1} a_1^{-1} a_1^{-1};
\end{tikzpicture}
\caption{The braid $\gamma_0$ from Theorem~\ref{Thm:Undecidability-b4}.}
\label{Fig:Undecidable-braid}
\end{figure}
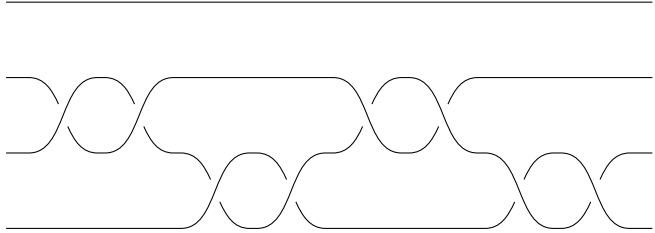
\end{theorem}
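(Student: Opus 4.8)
The plan is to reduce each statement in Theorem~\ref{Thm:Undecidability-b4} to the corresponding undecidability result for $A(P_4)$ established earlier, using the embedding of $A(P_4)$ into $\B_4$ from Theorem~\ref{Thm:Droms-Ap4-embeds} together with the hereditary behaviour of these decision problems recorded in Remark~\ref{rmk:preservation}. First, for part (i): since $A(P_4)$ embeds into $\B_4$ as a subgroup (generated by $s_2^2, (s_2s_3s_2)^2, s_3^2, s_1^2$), and by Theorem~\ref{Thm:LS} the group $A(P_4)$ contains a fixed finitely generated submonoid with undecidable membership problem, Remark~\ref{rmk:preservation} immediately transports this to $\B_4$: take the image of that submonoid under the embedding. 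This gives a fixed finite $X \subseteq \B_4$ with undecidable membership in $\operatorname{Mon}(X)$.

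For parts (ii) and (iii), I would argue similarly but need to be careful to track the target element through the embedding. By Proposition~\ref{Prop:newUndec-for-P4}, inside $A(P_4) = \pres{a,b,c,d}{ab=ba,\,bc=cb,\,cd=dc}$ the fixed-target submonoid membership problem for $\gamma := aca^{-1}c^{-1}$ is undecidable, and likewise the subsemigroup intersection problem against $\operatorname{Sgp}(\gamma)$. Under the Droms--Lewin--Servatius isomorphism $A(P_4) \cong \langle s_2^2, (s_2s_3s_2)^2, s_3^2, s_1^2\rangle \leq \B_4$, with the generators $a,b,c,d$ corresponding (in the appropriate order, which I would fix explicitly) to these four braids, the element $\gamma$ maps to a specific braid $\gamma_0 \in \B_4$. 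The content here is just a finite computation: writing $a \mapsto s_2^2$ and $c \mapsto s_3^2$ (choosing the correspondence so that $a$ and $c$ are the two ends of $P_4$ that appear in $\gamma$), the commutator $aca^{-1}c^{-1}$ maps to $s_2^2 s_3^2 s_2^{-2} s_3^{-2}$, which is exactly the braid drawn in Figure~\ref{Fig:Undecidable-braid} (word $a_2 a_2 a_1 a_1 a_2^{-1} a_2^{-1} a_1^{-1} a_1^{-1}$ in the figure's strand-labelling convention). Since decidability of the fixed-target submonoid membership problem for a fixed $\gamma$, and of the subsemigroup intersection problem against a fixed $\operatorname{Sgp}(\gamma)$, pass from $\B_4$ to its subgroup $A(P_4)$ by Remark~\ref{rmk:preservation}, decidability in $\B_4$ would contradict Proposition~\ref{Prop:newUndec-for-P4}.

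I expect the only genuinely delicate point to be bookkeeping: making sure the vertex labelling of $P_4$ used in Proposition~\ref{Prop:newUndec-for-P4} (where $\gamma = aca^{-1}c^{-1}$ involves the first and third generators) is matched correctly to the generator order in Theorem~\ref{Thm:Droms-Ap4-embeds}, so that the target element really does become the pictured braid $\gamma_0 = s_2^2 s_3^2 s_2^{-2} s_3^{-2}$ rather than some conjugate or a commutator of the wrong pair. Once the correspondence is pinned down, one checks that $a$ and $c$ are non-adjacent in $P_4$ (so $\gamma \neq 1$) and that their images $s_2^2, s_3^2$ do not commute in $\B_4$ (so $\gamma_0 \neq 1$), which is what makes the fixed target a nontrivial element and keeps the reduction honest. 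Everything else is a direct invocation of the quoted results, so the proof is short.
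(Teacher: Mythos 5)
Your proposal is correct and follows essentially the same route as the paper: part (i) transfers the fixed undecidable submonoid of $A(P_4)$ into $\B_4$ via the Droms--Lewin--Servatius embedding and Remark~\ref{rmk:preservation}, and parts (ii) and (iii) track the target $aca^{-1}c^{-1}$ through the correspondence $a \mapsto \sigma_2^2$, $c \mapsto \sigma_3^2$ to land on $\gamma_0 = \sigma_2^2\sigma_3^2\sigma_2^{-2}\sigma_3^{-2}$, exactly as in the paper's proof. The bookkeeping point you flag (matching the vertex order of $P_4$ in Proposition~\ref{Prop:newUndec-for-P4} with the generator order in Theorem~\ref{Thm:Droms-Ap4-embeds}) is handled identically in the paper.
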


\begin{proof}
We prove each statement in turn.

\begin{enumerate}
\item[(i)] In \cite{Lohrey2008} it is proved that there is a fixed finite subset $X \subseteq A(P_4)$ such that there is no algorithm deciding membership in $\Mon(X) \leq A(P_4)$. Since $A(P_4) \leq \B_4$ by Theorem~\ref{Thm:Droms-Ap4-embeds}, the same undecidability result is also true for $\B_4$. 
\item[(ii)] By Theorem~\ref{Thm:Droms-Ap4-embeds}, $A(P_4)$ is isomorphic to the subgroup of $\B_4$ generated by $\sigma_2^2, (\sigma_2 \sigma_3 \sigma_2)^2, \sigma_3^2, \sigma_1^2$, with the generators given in the same order as in $P_4$ and where the $\sigma_i$ are the generators of $\B_4$ in \eqref{Eq:braid-group-presentation}. Thus, by Proposition~\ref{Prop:newUndec-for-P4}(i), taking $a = \sigma_2^2$ and $c = \sigma_3^2$, we get that the fixed-target submonoid membership problem is undecidable for $\gamma_0 = \sigma_2^2 \sigma_3^2 \sigma_2^{-2} \sigma_3^{-2}$, which is precisely the element in Figure~\ref{Fig:Undecidable-braid}. 
\item[(iii)] This follows from using the same element $\gamma_0$ as in (ii) and applying Proposition~\ref{Prop:newUndec-for-P4}(ii).\end{enumerate}\end{proof}

Recall from \S\ref{Subsec:decision problems} that the fixed-target subsemigroup membership problem for $1 \in \B_4$ is usually called the \textit{identity problem}. We do not know whether this problem is decidable in $\B_4$. However, Theorem~\ref{Thm:Undecidability-b4}(ii) shows that if we consider the fixed-target subsemigroup membership problem for $\gamma_0$ rather than that for $1$, then the problem becomes undecidable. This might be regarded as evidence that identity problem may also be undecidable in $\B_4$. Other problems that remain open for $\B_4$ are the group problem and the subgroup membership problem. Note that $\B_4$ is known to be incoherent \cite{Gordon2004}, i.e.\ it contains finitely generated subgroups which are not finitely presented. This shows that the subgroup structure of $\B_4$ is already complicated, and lends credence to the idea that the subgroup membership problem may be undecidable in $\B_4$. 

We end this section by remarking that all the decidability results proved in this section for braid groups also hold true for a closely related object called the \emph{pure braid group} $\PB_n$. 
Indeed, it well-known and also clear from equation \eqref{Eq:braid-group-presentation} above (and the graphical representation of braids see 
\cite[\S 1.2.2]{kassel2008braid}) that $\B_n$ surjects onto $S_n$. The kernel of this homomorphism is called the \textit{pure braid group} $\PB_n$, which hence has index $n!$ in $\B_n$. All of the undecidability results above for  $\B_n$, were proved by means of embedding certain right-angled Artin groups $A(\Gamma)$ into $\B_n$. However, the results are equally valid for $\PB_n$, by the following reasoning. 
It follows from the main result of \cite{Crisp2001} that if $A(\Gamma)$ is a right-angled Artin group  with generating set $V$ (defined by the presentation in equation \eqref{Eq:raag-presentation}) then for any natural number $k \in \mathbb{N}$ with $k \geq 2$ the subgroup of $A(\Gamma)$ generated by the set of powers $\{ v^k : v \in V \}$ is again isomorphic to the group $A(\Gamma)$.        
Hence by taking powers which are sufficiently large multiples of $[\B_n : \PB_n] = n!$ it is not difficult to see that the following statement holds: for every finite graph $\Gamma$, 
there is a subgroup of $\B_n$ that is isomorphic to $A(\Gamma)$ if and only if there is a subgroup of $\PB_n$ that is isomorphic to $A(\Gamma)$.   
Thus, all undecidability results above for $\B_n$ with $n \geq 4$ also hold for $\PB_n$ with $n \geq 4$. To see that all the statements in Theorem~\ref{Thm:Undecidability-b4} remain true for the pure braid group $\PB_4$, it simply suffices to note that $\gamma_0 \in \PB_4$. Similarly, when $n \leq 3$, since $\B_n$ has decidable rational subset membership problem and $\PB_n \leq \B_n$ it follows that the rational subset membership problem (along with all other decision problems discussion in this section) is decidable in $\PB_n$ when $n \leq 3$.     

\section{Artin groups}\label{Sec:ArtinGroups}

In Theorem~\ref{Thm:Undecidability-b4}
above we give a list of undecidability results for $B_4$ obtained by applying the general results for $A(P_4)$ established earlier in the paper. That result was the original motivation for the work done in this article, which for the decision problems listed in that theorem shows that the problem is decidable in $\B_n$ if and only if $n \leq 3$. Somewhat surprisingly it turns out that, using similar methods, for each of the decision problems given in Theorem~\ref{Thm:Undecidability-b4} we can classify exactly for which Artin groups the given problem is decidable\footnote{The authors wish to thank Sang-hyun Kim (KIAS) for suggesting that our results and methods above for $\B_4$ might also be applied successfully to other Artin groups, and for other helpful discussions pertaining to right-angled Artin groups and braid groups.}. This classification will be given in terms of the existence of certain subgraphs in the defining graph. This result provides a common generalization of our characterization of braid groups with decidable submonoid membership problem above, and the theorem of Lohrey \& Steinberg from \cite{Lohrey2008} which characterizes the right-angled Artin groups with decidable submonoid membership problem. As well as developing and applying some of the ideas from \cite{Lohrey2008}, the other key ingredient for us is the recent work of Almeida \& Lima \cite{AlmeidaLima2021, AlmeidaLima2024} who classified the subgroup separable Artin groups. It will turn out that the conditions needed for an Artin group to be subgroup separable exactly match the conditions needed for it to have decidable submonoid (and rational subset) membership problem. Recall that a group $G$ is \emph{subgroup separable} if for every finitely generated subgroup $H$ of $G$, and element $g \in G \setminus H$ there is a finite index subgroup $K$ of $G$, which contains $H$ but $g \not\in K$.       

Following \cite{AlmeidaLima2021} we use $A(\mathcal{S})$ to denote the smallest class of Artin groups that contain all Artin groups of rank at most two, and has the following properties 
\begin{itemize} 
\item $A, B \in A(\mathcal{S}) \Rightarrow A \ast B \in A(\mathcal{S}) $; 
\item $A \in A(\mathcal{S}) \Rightarrow A \times \mathbb{Z} \in A(\mathcal{S}) $.  
  \end{itemize}
It follows from recent work \cite[Corollary A]{AlmeidaLima2021} that every subgroup separable Artin group lies in $A(\mathcal{S})$. The first result we will establish is the following. 

\begin{theorem}\label{thm_subgroup_sep_Artin_groups} 
Every group in $A(\mathcal{S})$ has decidable rational subset membership problem. In particular, every subgroup separable Artin group has decidable rational subset membership problem.  
\end{theorem}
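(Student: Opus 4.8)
The plan is to reduce to the case of a connected defining graph, then invoke the Almeida--Lima classification of subgroup separable Artin groups \cite{AlmeidaLima2021, AlmeidaLima2024} to split into a small number of cases, each of which is handled by known decidability and preservation results for the rational subset membership problem. First I would observe that if $\Gamma_1, \dots, \Gamma_k$ are the connected components of $\Gamma$ then $A(\Gamma) \cong A(\Gamma_1) \ast \cdots \ast A(\Gamma_k)$; since subgroup separability passes to subgroups, each $A(\Gamma_i)$ is again subgroup separable, and since decidability of the rational subset membership problem is preserved under free products \cite[Theorem~15]{Lohrey2015}, it is enough to prove the theorem when $\Gamma$ is connected.

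So assume $\Gamma$ is connected and $A(\Gamma)$ is subgroup separable. Here I would appeal to the Almeida--Lima classification to enumerate the possibilities for $A(\Gamma)$, which fall into two families. In the first family $A(\Gamma)$ is a right-angled Artin group, and the classification specialises to the condition that $\Gamma$ contains no induced $C_4$ or $P_4$; such groups have decidable rational subset membership problem by Theorem~\ref{Thm:LS}. In the second family the group is not right-angled; the permitted connected pieces are built from dihedral Artin groups $A(I_2(m))$ with $3 \le m < \infty$, possibly split off with a free-abelian direct factor or amalgamated with abelian pieces along a cyclic edge group. For these I would use that $A(I_2(m))$ has infinite cyclic centre $Z$ with $A(I_2(m))/Z$ virtually free (for $m = 3$ this is the familiar fact, used already in the proof of Theorem~\ref{thm:BraidClassification}, that $\B_3$ is virtually $F_n \times \mathbb{Z}$), so that pulling back a finite-index free subgroup through the central extension, which splits because $\mathrm{H}^2$ of a free group with trivial coefficients vanishes, exhibits $A(I_2(m))$, and hence every group in this family, as virtually $F_n \times \mathbb{Z}^\ell$ for suitable $n$ and $\ell$. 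Now $F_n \times \mathbb{Z}^\ell$ is the right-angled Artin group on the join of $\overline{K_n}$ with $K_\ell$, which has no induced $C_4$ or $P_4$, so it has decidable rational subset membership problem by Theorem~\ref{Thm:LS} (for $\ell = 1$ this is Kambites \cite[Theorem~6.5]{Kambites2007}), and this decidability transfers to its finite extensions by \cite[Section~5]{Lohrey2015}. In either family, therefore, $A(\Gamma)$ has decidable rational subset membership problem, which together with the first step proves the theorem.

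I expect the real difficulty to lie in the bookkeeping at the start of the second step: translating the Almeida--Lima forbidden-induced-subgraph criterion into the explicit list of connected pieces above, and checking that each listed group genuinely is either a right-angled Artin group with no induced $C_4$ or $P_4$ or virtually of the form $F_n \times \mathbb{Z}^\ell$. In particular one has to rule out any piece carrying a relation of infinite type or a sufficiently large ``mixed'' labelled subgraph --- such a piece should already fail subgroup separability, indeed should contain a copy of $A(P_4)$ or of $F_2 \times F_2$, in line with the undecidability results of the later sections --- and one has to treat with care the pieces that are amalgams of a dihedral Artin group with abelian factors along a cyclic subgroup, confirming that these too remain virtually $F_n \times \mathbb{Z}^\ell$. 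Once the list of permitted graphs is pinned down, the remaining steps are routine applications of Theorem~\ref{Thm:LS}, \cite[Theorem~6.5]{Kambites2007}, and \cite[Theorem~15 and Section~5]{Lohrey2015}.
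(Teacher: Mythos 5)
Your reduction to connected $\Gamma$ and your treatment of the rank-two building blocks (dihedral Artin groups are virtually $F_n\times\Z$, via the central extension over a virtually free quotient) match the paper's base case, but the dichotomy you then assert for connected subgroup separable Artin groups is false, and this is a genuine gap. The Almeida--Lima result the paper uses says that every subgroup separable Artin group lies in the class $A(\mathcal{S})$, which is the smallest class containing all Artin groups of rank at most two and closed under \emph{arbitrarily nested} free products and direct products with $\Z$. Peeling off the outermost free product via connected components only removes one layer of this recursion. For example, the Artin group on the cone (with $2$-labelled edges) over two disjoint $3$-labelled edges is $(\B_3 \ast \B_3)\times \Z$: it is connected, it is subgroup separable (one checks it avoids all the forbidden subgraphs of Theorem~\ref{thm_Artin_in_general}), it is not right-angled, and it is not virtually $F_n\times\Z^{\ell}$ (any $\Z^2$ inside $F_n\times\Z^{\ell}$ meets the centre nontrivially, whereas $\B_3\ast\B_3$ virtually contains $\Z^2\ast\Z$, which has trivial centre). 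So it falls into neither of your two families, and your case analysis does not cover it.

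The deeper problem is that repairing this by recursing through the structure of $A(\mathcal{S})$ requires the decidability property to be closed under direct product with $\Z$, and that closure is precisely what is \emph{not} available for the rational subset membership problem: the preservation results you cite from \cite{Lohrey2015} cover free products and finite extensions only, and whether decidability of rational subset membership is preserved by $-\times\Z$ is not known in general. This is exactly why the paper does not run the induction at the level of rational subset membership at all, but instead at the level of Lohrey--Steinberg's SLI property, which \emph{is} closed under free products, direct products with $\Z$, and finite extensions (Lemma~\ref{lem_LS_closureProperties}), and which implies decidable rational subset membership. Concretely: the paper shows every rank $\le 2$ Artin group is an SLI-group (Lemma~\ref{lem_rank2Artin}, using the same virtually $F_n\times\Z$ facts you invoke), deduces that every group in $A(\mathcal{S})$ is an SLI-group by the closure properties, and then applies \cite[Corollary~A]{AlmeidaLima2021}. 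Your argument would go through if you replaced ``has decidable rational subset membership problem'' by ``is an SLI-group'' (or by ``is virtually a right-angled Artin group on a $\{C_4,P_4\}$-free graph'', which is also preserved under the two closure operations), but as written the inductive step is unsupported.
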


In \cite[Theorem 3.1]{Kambites2007} it is shown that a finitely generated group $G$ has decidable rational subset membership problem if and only if the word problem for $G$ with respect to $A$ is RID (rational intersection decidable). Here the word problem for the group is the set of all words over the finite generating set that are equal to the identity element of the group. In \cite{Lohrey2008} Lohrey \& Steinberg introduce another class called SLI (semilinear intersection) languages, and corresponding class of SLI-groups. They prove \cite[Lemma~4]{Lohrey2008} that this class is  
contained in the class of languages RID, so every SLI-group is an RID-group and hence has decidable rational subset membership problem. 

We will prove Theorem~\ref{thm_subgroup_sep_Artin_groups} by showing that every subgroup separable Artin group is an SLI-group. This can be done by first verifying that all rank 1 and rank 2 Artin groups are SLI-groups, and then applying some closure properties from of SLI-groups from \cite{Lohrey2008}.  

We will not need to define SLI-groups here but just list some examples and basic properties in the following result. For a definition of SLI-group we refer the reader to \cite[Section~3]{Lohrey2008}. 

\begin{lemma}\label{lem_LS_closureProperties} Let $G$ and $H$ be finitely generated groups.    
\begin{enumerate}
\item[(i)] 
If $G$ is an SLI-group, then $G \times \Z$ is also an SLI-group. 
\item[(ii)] 
If $G$ and $H$ are SLI-groups, then $G \ast H$ is also an SLI-group. 
\item[(iii)] 
If $H$ is a finite index subgroup of $G$, and $H$ is an SLI-group, then $G$ is an SLI-group.      
\item[(iv)] 
Every finitely generated free group is an SLI-group.  
  \end{enumerate}
\end{lemma}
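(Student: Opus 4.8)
The plan is to prove each of the four items of Lemma~\ref{lem_LS_closureProperties} by unwinding the definition of an SLI-group from \cite[Section~3]{Lohrey2008}; since that definition is recalled there, these are in essence bookkeeping statements that assemble closure properties already implicit in the Lohrey--Steinberg framework. First I would treat item~(iv): a finitely generated free group $F_n$ has word problem which is a context-free (indeed deterministic context-free) language, and the key point is that its Parikh image, and more generally the Parikh images of the intersections with regular languages that arise in checking the SLI condition, are semilinear by Parikh's theorem. So $F_n$ being an SLI-group reduces to verifying that the defining semilinearity condition is met, which for free groups is either stated directly in \cite{Lohrey2008} or follows from the fact that free groups are SLI as part of their treatment of RAAGs without $C_4$ or $P_4$ (a single vertex, or an edgeless graph, gives a free group).

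Next I would do item~(i), the closure under direct product with $\mathbb{Z}$. Here the strategy is that a word problem language for $G \times \mathbb{Z}$ can be described in terms of the word problem language for $G$ together with tracking of the exponent sum in the $\mathbb{Z}$-coordinate; since semilinear sets are closed under the relevant operations (intersection with semilinear sets, projection, and adding a free $\mathbb{Z}$-coordinate keeps things semilinear), the SLI property is preserved. This is presumably exactly \cite[Lemma~something]{Lohrey2008} or an immediate consequence of their results, since they need precisely this to handle the $A \times \mathbb{Z}$ operation when classifying RAAGs. For item~(ii), closure under free product, the approach is to use that the word problem of $G \ast H$ is built from those of $G$ and $H$ via the normal form for free products, and that SLI-groups (like RID-groups) are closed under free product --- this is stated or proved in \cite{Lohrey2008}, and I would cite it directly. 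Finally item~(iii), closure under passing to finite-index overgroups: if $H \leq G$ has finite index and $H$ is SLI, then $G$ is SLI; this follows because rational subsets and the associated semilinearity conditions behave well under finite extensions (the same reason decidability of the rational subset membership problem is preserved by finite extensions, as noted in \cite[Section~5]{Lohrey2015}), and one can reduce questions about $G$ to finitely many coset-translated questions about $H$.

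The main obstacle I anticipate is that none of these is genuinely hard once the definition of SLI-group is in hand --- the difficulty is purely one of locating or reconstructing the precise statements in \cite{Lohrey2008}, since the paper here explicitly declines to reproduce the definition. In practice the cleanest write-up will simply quote the relevant lemmas from \cite{Lohrey2008}: their paper establishes closure of the SLI class under free products and under finite-index extensions en route to its main theorem, and the direct-product-with-$\mathbb{Z}$ case together with the free-group base case are likewise needed there to run the induction over the RAAG classification. So the proof will mostly consist of pointing to \cite[Section~3 and Section~4]{Lohrey2008} for each clause, with at most a sentence explaining why the cited closure property applies, rather than any real computation. The one place where a little care is warranted is making sure the finite-index direction (iii) is stated for the overgroup (going \emph{up} from $H$ to $G$), which is the direction we will actually use when $G$ is an Artin group with an SLI finite-index subgroup; this is the standard direction for such preservation results and should be available in \cite{Lohrey2008} in exactly that form.
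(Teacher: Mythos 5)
Your proposal matches the paper's proof: parts (i)--(iii) are handled exactly as the paper does, by citing the corresponding closure lemmas of Lohrey \& Steinberg (Lemmas~6, 7 and~3 of \cite{Lohrey2008}, respectively). For part (iv) the paper takes a marginally different route --- the trivial group is an SLI-group, so $F_n \cong \Z \ast \cdots \ast \Z$ is SLI by applying (i) and then (ii) --- but your context-free-word-problem/Parikh-theorem argument (or a direct appeal to \cite{Lohrey2008}) is equally valid, so the two proofs are essentially the same.
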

\begin{proof} 
Parts (i), (ii) and (iii) are Lemmas~6, 7 and 3, respectively, in \cite{Lohrey2008}. Part (iv) follows from (i) and (ii), together with the fact that the trivial group has decidable rational subset membership problem so is an RID group and thus an SLI-group; alternatively, one can use the fact that finite rank free groups have decidable rational subset membership problem. 
  \end{proof}

Recall that an Artin group of rank two is known in the literature as a \textit{dihedral} Artin group. It is also well known that dihedral Artin groups admit particularly simple presentations (see e.g. 
\cite[Proof of Theorem 5.1]{CrispParis2005}), namely one of the presentations:
\begin{align}
\pres{a, b}{(a b)^k a = b(a b)^k} & \cong \pres{x, y}{x^2 = y^{2k+1}}, \mbox{and} \label{Eq:torus-knot-case}\\
\pres{a, b}{(a b)^k = (b a)^k} & \cong \pres{x, t}{x^ky = yx^k} \cong \operatorname{BS}(k,k) \label{Eq:BS(k,k)-case}
\end{align}
where $k \geq 1$, and $\operatorname{BS}(k, k)$ denotes a (unimodular) Baumslag--Solitar group.

\begin{lemma}\label{lem_rank2Artin}
Let $A$ be an Artin group with rank at most $2$. Then $A$ is an SLI-group.    
\end{lemma}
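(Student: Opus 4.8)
The plan is to run through the short list of Artin groups of rank at most $2$ and verify each is an SLI-group using the closure properties recorded in Lemma~\ref{lem_LS_closureProperties}. If $A$ has rank $0$ or $1$, then it is trivial or infinite cyclic, hence a finitely generated free group, and so an SLI-group by Lemma~\ref{lem_LS_closureProperties}(iv). If $A$ has rank $2$ and its defining graph has no edge, then $A \cong F_2$, again an SLI-group by Lemma~\ref{lem_LS_closureProperties}(iv). This reduces the statement to the case of a dihedral Artin group, i.e.\ when the defining graph is a single edge with label $m \geq 2$.

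For a dihedral Artin group I would use the two explicit presentations \eqref{Eq:torus-knot-case} and \eqref{Eq:BS(k,k)-case}. In both cases the key point is that $A$ contains an infinite cyclic \emph{central} subgroup $Z$ such that $A/Z$ is virtually free. In the odd case $A \cong \pres{x,y}{x^2 = y^{2k+1}}$, the element $x^2 = y^{2k+1}$ commutes with both generators, so $Z := \langle x^2 \rangle$ is central; it is infinite cyclic because the assignment $x \mapsto 2k+1$, $y \mapsto 2$ defines a homomorphism $A \to \Z$ that is nonzero on $x^2$; and $A/Z \cong \Z/2 \ast \Z/(2k+1)$ is a free product of finite cyclic groups, hence virtually free. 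In the even case $A \cong \operatorname{BS}(k,k) \cong \pres{x,t}{t x^k t^{-1} = x^k}$, the element $x^k$ commutes with $x$ trivially and with $t$ by the defining relation, so $Z := \langle x^k \rangle$ is an infinite cyclic central subgroup, and $A/Z \cong \Z/k \ast \Z$ is again virtually free; when $k = 1$ this degenerates to $A \cong \Z^2$, which is immediately an SLI-group by Lemma~\ref{lem_LS_closureProperties}(iv) and (i).

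To conclude, set $Q = A/Z$ and choose a finite-index free subgroup $F \leq Q$ (Bass--Serre theory). Its preimage $\widetilde{F} \leq A$ has finite index in $A$ and fits into a central extension $1 \to Z \to \widetilde{F} \to F \to 1$; since $F$ is free this extension splits, giving $\widetilde{F} \cong F \times Z \cong F_r \times \Z$ for some $r \geq 0$. By Lemma~\ref{lem_LS_closureProperties}(iv) and (i), $F_r \times \Z$ is an SLI-group, whence $A$ is an SLI-group by Lemma~\ref{lem_LS_closureProperties}(iii). I do not expect a genuine obstacle here: the only care needed is in assembling the standard structural facts about dihedral Artin groups (identifying the centre, recognising the central quotient as a virtually free free product of cyclic groups, and using that central extensions of free groups split) and in checking that the small-parameter degenerate cases (rank $\leq 1$ and $k = 1$) are covered.
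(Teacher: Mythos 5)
Your proof is correct and follows essentially the same route as the paper: reduce to the dihedral case via the presentations \eqref{Eq:torus-knot-case} and \eqref{Eq:BS(k,k)-case}, show the group is virtually $F_r \times \Z$, and conclude with the closure properties of Lemma~\ref{lem_LS_closureProperties}. The only difference is that where the paper simply cites the literature for the virtual $F_r \times \Z$ structure (Niblo for torus knot groups, Gilbert for $\operatorname{BS}(k,k)$), you derive it directly via the central quotient and the splitting of central extensions over free groups, and you also explicitly treat the degenerate cases (rank $0$, edgeless graph, $k=1$) that the paper leaves implicit.
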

\begin{proof} 
An Artin group of rank 1 is $\mathbb{Z}$ and so is an SLI-group by Lemma~\ref{lem_LS_closureProperties}.  Now suppose that $A$ is an Artin group of rank 2. Then $A$ is either isomorphic to the group in \eqref{Eq:torus-knot-case}, or \eqref{Eq:BS(k,k)-case}. In the first case \eqref{Eq:torus-knot-case}, $A$ is isomorphic to a torus knot group, and it is well-known (see e.g. \cite[Theorem~2.1]{Niblo2001}) that every such group is virtually $F_l \times \mathbb{Z}$. Hence by Lemma~\ref{lem_LS_closureProperties} it follows that in this case $A$ is an SLI-group. In the second case when $A$ is given by \eqref{Eq:BS(k,k)-case}, we have $A \cong \operatorname{BS}(k,k)$, which is also well-known to contain a finite index subgroup isomorphic to $F_l \times \mathbb{Z}$; see e.g.\ \cite{Gilbert2015}.  
Hence, again by applying 
Lemma~\ref{lem_LS_closureProperties}
it follows that in this case $A$ is also an SLI-group. 
\end{proof}

\begin{proof}[Proof of Theorem~\ref{thm_subgroup_sep_Artin_groups}] 
By Lemma~\ref{lem_rank2Artin} every Artin group with rank at most $2$ is an SLI-group. Together with Lemma~\ref{lem_LS_closureProperties}, this yields that every group in $A(\mathcal{S})$ is an SLI-group. Hence by \cite[Lemma 4]{Lohrey2008} every group in $A(\mathcal{S})$ has decidable rational subset membership problem. By \cite[Corollary A]{AlmeidaLima2021} every subgroup separable Artin group is in $A(\mathcal{S})$, completing the proof.
  \end{proof}

The following theorem classifies the Artin groups with decidable submonoid membership problem. Note that the classification of braid groups with decidable submonoid membership problem we obtained above, and the classification of right-angled Artin groups with decidable submonoid membership problem proved in \cite{Lohrey2008},
can both be recovered as special cases of this general theorem for Artin groups. 

\begin{theorem}\label{thm_Artin_in_general} 
Let $A = A(\Gamma)$ be an Artin group. 
Then the following are equivalent: 
\begin{enumerate} 
\item[(i)] $A$ has decidable submonoid membership problem;
\item[(ii)] $A$ has decidable rational subset membership problem;
\item[(iii)] the graph $\Gamma$ does not embed any of the following graphs as an induced subgraph: 
  \begin{enumerate} 
  \item a (generalized) square of one of the following three forms   
\[
\begin{tikzpicture}
\tikzstyle{lightnode}=[circle, draw, fill=black!20,
                        inner sep=0pt, minimum width=4pt]
\tikzstyle{darknode}=[circle, draw, fill=black!99,
                        inner sep=0pt, minimum width=4pt]
\draw (0,0) node[lightnode] {} --(1,0);
\draw (1,0) node[lightnode] {} --(1,1);
\draw (1,1) node[lightnode] {} --(0,1);
\draw (0,1) node[lightnode] {} --(0,0) node[lightnode] {};
\draw (0.5,0) node[below] {\scriptsize $2$};
\draw (0.5,1) node[above] {\scriptsize $2$};
\draw (0,0.5) node[left] {\scriptsize $2$};
\draw (1,0.5) node[right] {\scriptsize $2$};
\end{tikzpicture}
\quad
\quad
\begin{tikzpicture}
\tikzstyle{lightnode}=[circle, draw, fill=black!20,
                        inner sep=0pt, minimum width=4pt]
\tikzstyle{darknode}=[circle, draw, fill=black!99,
                        inner sep=0pt, minimum width=4pt]
\draw (1,0) --(0,1);
\draw (0,0) node[lightnode] {} --(1,0);
\draw (1,0) node[lightnode] {} --(1,1);
\draw (1,1) node[lightnode] {} --(0,1);
\draw (0,1) node[lightnode] {} --(0,0) node[lightnode] {};
\draw (0.5,0) node[below] {\scriptsize $2$};
\draw (0.5,1) node[above] {\scriptsize $2$};
\draw (0,0.5) node[left] {\scriptsize $2$};
\draw (1,0.5) node[right] {\scriptsize $2$};
\draw (0.6,0.6) node {\scriptsize $p$};
\end{tikzpicture}
\quad
\quad
\begin{tikzpicture}
\tikzstyle{lightnode}=[circle, draw, fill=black!20,
                        inner sep=0pt, minimum width=4pt]
\tikzstyle{darknode}=[circle, draw, fill=black!99,
                        inner sep=0pt, minimum width=4pt]
\draw (1,0) --(0,1);
\draw (0,0) --(1,1);
\draw (0,0) node[lightnode] {} --(1,0);
\draw (1,0) node[lightnode] {} --(1,1);
\draw (1,1) node[lightnode] {} --(0,1);
\draw (0,1) node[lightnode] {} --(0,0) node[lightnode] {};
\draw (0.5,0) node[below] {\scriptsize $2$};
\draw (0.5,1) node[above] {\scriptsize $2$};
\draw (0,0.5) node[left] {\scriptsize $2$};
\draw (1,0.5) node[right] {\scriptsize $2$};
\draw (0.4,0.8) node {\scriptsize $p$};
\draw (0.4,0.2) node {\scriptsize $q$};
\end{tikzpicture}
\]
with $p >2$ and $q>2$, 
\item a triangle of the form
\[ 
\begin{tikzpicture}
\tikzstyle{lightnode}=[circle, draw, fill=black!20,
                        inner sep=0pt, minimum width=4pt]
\tikzstyle{darknode}=[circle, draw, fill=black!99,
                        inner sep=0pt, minimum width=4pt]
\draw (0,0) node[lightnode] {} --(1,0);
\draw (1,0) node[lightnode] {} --(0.5,1);
\draw (0.5,1) node[lightnode] {} --(0,0) node[lightnode] {};
\draw (0.5,0) node[below] {\scriptsize $r$};
\draw (0.3,0.6) node[left] {\scriptsize $p$};
\draw (0.7,0.6) node[right] {\scriptsize $q$};
\end{tikzpicture}
\]
where at most one of $\{p,q,r\}$ is equal to $2$, or
\item a path of one of the following two forms  
\[ 
\begin{tikzpicture}
\tikzstyle{lightnode}=[circle, draw, fill=black!20,
                        inner sep=0pt, minimum width=4pt]
\tikzstyle{darknode}=[circle, draw, fill=black!99,
                        inner sep=0pt, minimum width=4pt]
\draw (0,0) node[lightnode] {} --(1,0);
\draw (1,0) node[lightnode] {} --(2,0);
\draw (2,0) node[lightnode] {} --(3,0) node[lightnode] {};
\draw (0.5,0) node[below] {\scriptsize $2$};
\draw (1.5,0) node[below] {\scriptsize $2$};
\draw (2.5,0) node[below] {\scriptsize $2$};
\end{tikzpicture}
\quad
\quad
\begin{tikzpicture}
\tikzstyle{lightnode}=[circle, draw, fill=black!20,
                        inner sep=0pt, minimum width=4pt]
\tikzstyle{darknode}=[circle, draw, fill=black!99,
                        inner sep=0pt, minimum width=4pt]
\draw (0,0) node[lightnode] {} --(1,0);
\draw (1,0) node[lightnode] {} --(2,0) node[lightnode] {};
\draw (0.5,0) node[below] {\scriptsize $p$};
\draw (1.5,0) node[below] {\scriptsize $q$};
\end{tikzpicture}
\]
where at most one of $\{p,q\}$ is equal to $2$.   
  \end{enumerate}    
\end{enumerate}
Moreover, if $\Gamma$ does embed one of the graphs in (a), (b) or (c) as an induced subgraph then $A(\Gamma)$ contains a fixed finitely generated submonoid in which membership is undecidable. 
\end{theorem}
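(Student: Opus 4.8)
The plan is to prove the implications (ii)$\Rightarrow$(i) and (iii)$\Rightarrow$(ii) together with the ``Moreover'' clause, the last of which also gives (i)$\Rightarrow$(iii) by contraposition (and in the stronger ``fixed finitely generated submonoid'' form). The implication (ii)$\Rightarrow$(i) is immediate, since any submonoid is a rational subset. For (iii)$\Rightarrow$(ii), the point is that condition (iii) is equivalent to $A(\Gamma) \in A(\mathcal S)$: a defining graph omitting all the subgraphs of (a), (b), (c) as induced subgraphs can be built up, starting from graphs on at most two vertices, by repeatedly forming disjoint unions and adjoining a cone vertex joined to everything by edges labelled $2$ --- which on the level of groups are exactly the operations $\ast$ and $\times \Z$ defining $A(\mathcal S)$; one has to verify this graph-decomposition statement, but it is a standard structural argument. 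Granting it, every Artin group in $A(\mathcal S)$ has decidable rational subset membership problem, which is precisely the content of the proof of Theorem~\ref{thm_subgroup_sep_Artin_groups} (Lemmas~\ref{lem_rank2Artin} and \ref{lem_LS_closureProperties} together with \cite[Lemma~4]{Lohrey2008}), so (ii) holds; and by \cite[Corollary~A]{AlmeidaLima2021} the class $A(\mathcal S)$ coincides with the class of subgroup separable Artin groups, which is how one extracts the promised equivalence with subgroup separability.

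It remains to prove the ``Moreover'' clause. Suppose $\Gamma$ contains, as an induced subgraph, a graph $\Delta$ of one of the forms (a), (b), (c). By van der Lek's theorem that the standard parabolic subgroup of an Artin group on an induced subgraph is itself the Artin group of that subgraph, $A(\Delta) \leq A(\Gamma)$; so by Remark~\ref{rmk:preservation} it is enough to exhibit, inside $A(\Delta)$, a fixed finitely generated submonoid with undecidable membership. By Theorem~\ref{Thm:LS} together with Remark~\ref{rmk:preservation} once more, it suffices in turn to embed $A(P_4)$ or $A(C_4) \cong F_2 \times F_2$ into $A(\Delta)$. We treat the cases in order of difficulty.

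If $\Delta$ is the all-$2$ square or the all-$2$ path, then $A(\Delta)$ is literally $F_2 \times F_2$ or $A(P_4)$. If $\Delta$ is the second or third generalized square, one reads off from the presentation that $A(\Delta)$ is a direct product: writing $A_m$ for the dihedral Artin group with label $m$, the second square gives $A(\Delta) \cong F_2 \times A_p$, with the two vertices not incident to the chord generating the free factor and the two endpoints of the chord generating $A_p$, all of the former commuting with all of the latter; the third square similarly gives $A(\Delta) \cong A_p \times A_q$ via the two diagonals. Since $p, q > 2$, each of $A_p, A_q$ is virtually $F_l \times \Z$ with $l \geq 2$, by the presentations \eqref{Eq:torus-knot-case} and \eqref{Eq:BS(k,k)-case} and the facts recalled in the proof of Lemma~\ref{lem_rank2Artin}; hence each contains $F_2$, and therefore $A(\Delta)$ contains $F_2 \times F_2 \cong A(C_4)$.

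The remaining cases --- $\Delta$ a triangle with at most one label equal to $2$, or $\Delta$ the three-vertex path with at most one label equal to $2$ --- form the technical core, and are where I expect the genuine difficulty. For the three-vertex path with labels $(p,q)$ the Artin group is the amalgamated free product $A_p \ast_{\langle b\rangle} A_q$ along the cyclic subgroup generated by the middle vertex $b$; using that a dihedral factor with label $\geq 3$ contains $F_2 \times \Z$ with the amalgamating $\langle b \rangle$ lying inside the $F_2$-part, one can write down four elements and check, via the normal form for amalgamated products, that they generate a copy of $A(P_4)$ (or of $F_2 \times F_2$). For the triangle case one seeks to generalise the Droms--Lewin--Servatius embedding $A(P_4) \hookrightarrow \B_4$ (Theorem~\ref{Thm:Droms-Ap4-embeds}; recall $\B_4$ is the Artin group of the triangle with labels $(2,3,3)$): produce, in an arbitrary triangle Artin group with two labels $\geq 3$, four elements built from powers of the standard generators, their products, and the central elements of the rank-$2$ parabolic subgroups, and verify both that the defining relations of $A(P_4)$ hold and that the subgroup does not collapse onto a proper quotient. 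This last verification, carried out uniformly over all admissible label triples, is the main obstacle; the natural tools are the actions of these Artin groups on their Bass--Serre trees and Deligne complexes, together with the known centraliser structure of dihedral and triangle Artin groups.
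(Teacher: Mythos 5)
Your overall architecture matches the paper's: (ii)$\Rightarrow$(i) is trivial, (iii)$\Rightarrow$(ii) goes through the class $A(\mathcal{S})$ and the SLI machinery of Theorem~\ref{thm_subgroup_sep_Artin_groups}, and (i)$\Rightarrow$(iii) together with the ``Moreover'' clause are obtained by embedding $A(C_4)\cong F_2\times F_2$ or $A(P_4)$ into the parabolic subgroup on the forbidden induced subgraph and then invoking Theorem~\ref{Thm:LS} and Remark~\ref{rmk:preservation}. Your treatment of case (a) is complete and correct, and in fact more elementary than the paper's: you read off the direct product decompositions $F_2\times A_p$ and $A_p\times A_q$ directly from the presentations and use that a dihedral Artin group with label $>2$ contains $F_2$, whereas the paper invokes \cite{Crisp2001}.

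There are, however, two genuine gaps, and both occur exactly where the paper instead cites Almeida \& Lima. First, for (iii)$\Rightarrow$(ii) you assert that any labelled graph avoiding (a), (b), (c) as induced subgraphs can be built up from rank-$\leq 2$ graphs by disjoint unions and by coning with $2$-labelled edges, calling this ``a standard structural argument''; this combinatorial classification is precisely the content of \cite[Theorem~1.1]{AlmeidaLima2024} together with \cite[Corollary~A]{AlmeidaLima2021}, and it is not actually proved in your proposal (nor reproved in the paper, which cites it). Second, and more seriously, for the cases in (b) and (c) with at most one label equal to $2$ you only sketch a strategy --- normal forms in $A_p\ast_{\langle b\rangle}A_q$ for the three-vertex path, and a hoped-for generalization of the Droms--Lewin--Servatius embedding for triangles --- and you yourself flag the triangle case as ``the main obstacle''. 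No generating elements are exhibited and no injectivity is verified, so the undecidability half of the theorem is not established for these families, which are the new content beyond the RAAG and braid group cases. The paper closes this gap by appealing to the argument of \cite[Lemma~4.2]{AlmeidaLima2021}, which produces the required embedding of $A(P_4)$ into exactly these Artin groups. To complete your proof you should either cite those results or supply the decomposition lemma and the embeddings; as written, the two hardest steps are placeholders.
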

\begin{proof} 
((iii) $\Rightarrow$ (ii)) 
By \cite[Theorem 1.1]{AlmeidaLima2024} (which is proved using the main result of \cite{AlmeidaLima2021}) it follows that 
if $\Gamma$ does not embed any of the graphs in (a), (b), or (c) then  
$A(\Gamma)$ is subgroup separable which by Theorem~\ref{thm_subgroup_sep_Artin_groups} implies that $A$ has decidable rational subset membership problem. 
((ii) $\Rightarrow$ (i)) is trivial.  
((i) $\Rightarrow$ (iii)) 
Suppose that $\Gamma$ does embed one of the graphs of (a), (b) or (c) as an induced subgraph. 
Let us consider each case in turn.   
Case (a): In this case by \cite{Crisp2001} the RAAG $A(C_4) \cong F_2 \times F_2$ embeds into $A(\Gamma)$, which hence contains a fixed finitely generated submonoid (in fact subgroup) in which membership is undecidable by \cite{Mikhailova1958}.    
Cases (b) and (c): In these cases it follows from the argument of the proof of \cite[Lemma 4.2]{AlmeidaLima2021} that in each case $A(\Gamma)$ contains\footnote{We note that diagram (4.1) in the proof of \cite[Lemma~4.2]{AlmeidaLima2021} there is an edge between $x$ and $y$ that should not be there. Indeed, by the definition of $R(B)$ in that paper for there to be an edge between $x$ and $y$ one would need every vertex of $\{a,b\}$ to be connected to every vertex of $\{b,c\}$ by a $2$-labelled edge in the defining graph of the Artin group which is not true. This does not affect the validity of the proof of \cite[Lemma~4.2]{AlmeidaLima2021}, or the proof of Theorem~\ref{thm_Artin_in_general} above, since the RAAG of the graph (4.1) with that edge removed still contains an induced subgraph isomorphic to a path of length $4$. Related to this, note that since the braid group $B_4 \cong A(2,3,3)$ does not embed $A(C_4) = F_2 \times F_2$ it follows that (4.1) cannot contain a square as an induced subgraph.} the RAAG $A(P_4)$ and hence by \cite{Lohrey2008} it follows that $A(\Gamma)$ contains a fixed finitely generated submonoid in which membership is undecidable.       
  \end{proof}
In the same way as for braid groups in Theorem~\ref{Thm:Undecidability-b4} above, for Artin groups with undecidable submonoid membership problem we can apply our results to identify other algorithmic problems that are also undecidable, obtaining the following.    
\begin{theorem} \label{thm_Artin_in_general_OtherAlgProblems}
Let $A = A(\Gamma)$ be an Artin group with undecidable submonoid membership problem, i.e.\ such that one of the graphs in (a), (b) or (c) from the statement of Theorem~\ref{thm_Artin_in_general} embeds into $\Gamma$.
\begin{enumerate} 
\item[(i)] If one of the graphs in (b) or (c) embeds into $\Gamma$, then $A(P_4)$ embeds into $A(\Gamma)$, and the following problems are all undecidable in $A(\Gamma)$: 
the rational subset membership problem, 
submonoid membership problem,
fixed-target submonoid membership problem,
and the subsemigroup intersection problem.
\item[(ii)]  If one of the graphs in (a) embeds into $\Gamma$, then $A(C_4)$ embeds into $A(\Gamma)$, and all the problems listed in (i) are undecidable in $A(\Gamma)$. Additionally, all the following problems are undecidable in $A(\Gamma)$: the group problem, identity problem, and subgroup membership problem.    
  \end{enumerate} 
\end{theorem}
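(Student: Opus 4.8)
The plan is to deduce both parts from the corresponding facts about the two ``minimal'' right-angled Artin groups $A(P_4)$ and $A(C_4)$, exploiting that the required embeddings are already established inside the proof of Theorem~\ref{thm_Artin_in_general}: if $\Gamma$ embeds a graph from (b) or (c) then $A(\Gamma)$ contains a copy of $A(P_4)$ (via the argument of \cite[Lemma~4.2]{AlmeidaLima2021}), and if $\Gamma$ embeds a graph from (a) then $A(\Gamma)$ contains a copy of $A(C_4)\cong F_2\times F_2$ (via \cite{Crisp2001}). Granting these inclusions, every undecidability assertion of the theorem follows by transporting a known result about $A(P_4)$ or $A(C_4)$ up to $A(\Gamma)$ using Remark~\ref{rmk:preservation}, since each decision problem in question is inherited by finitely generated subgroups.

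For part (i), I would argue as follows. The rational subset membership problem and the submonoid membership problem are undecidable in $A(P_4)$ by Theorem~\ref{Thm:LS} (the submonoid case also being contained in Theorem~\ref{thm_Artin_in_general}); the fixed-target submonoid membership problem for $aca^{-1}c^{-1}$, and the subsemigroup intersection problem, are undecidable in $A(P_4)$ by Proposition~\ref{Prop:newUndec-for-P4}(i) and (ii) respectively. Pulling these back along $A(P_4)\leq A(\Gamma)$ with Remark~\ref{rmk:preservation} gives all four statements of (i), where ``fixed-target submonoid membership problem'' is understood to be for the image of $aca^{-1}c^{-1}$ under the chosen embedding.

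For part (ii) it suffices, again by Remark~\ref{rmk:preservation}, to prove each listed problem undecidable in $F_2\times F_2$. The submonoid membership and subgroup membership problems are undecidable in $F_2\times F_2$ (Theorem~\ref{Thm:LS} and Mikhailova \cite{Mikhailova1958}; note subgroup membership reduces to submonoid membership after adjoining inverses of generators), the rational subset membership problem is undecidable since it generalizes submonoid membership, and the identity problem is undecidable by Bell \& Potapov \cite{Bell2010}. The group problem and the subsemigroup intersection problem follow from undecidability of the identity problem exactly as recorded in \S\ref{Sec:undecidability-in-b4}; for the intersection problem one uses that, modulo the word problem, $1\in\operatorname{Sgp}(X)$ holds if and only if $\operatorname{Sgp}(X)\cap\operatorname{Sgp}(x^{-1})\neq\varnothing$ for some $x\in X$, so the identity problem reduces to finitely many instances of subsemigroup intersection. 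The remaining case, the fixed-target submonoid membership problem, I would handle through Lemma~\ref{Lem:reachability-for-RAT1-reduces-to-SMP-reach-xY}(i): since $F_2\times F_2\cong A(C_4)$ has undecidable rational subset membership problem, that lemma produces a fixed target element $xy^{-1}$ with undecidable submonoid membership in $(F_2\times F_2)\ast F_3$; embedding $(F_2\times F_2)\ast F_3$ into $F_2\times F_2$ with the two free factors in free-product position and applying Remark~\ref{rmk:preservation} completes the case. The additional assertions of (ii) — the group, identity, and subgroup membership problems — are precisely the $F_2\times F_2$ statements quoted above, transported to $A(\Gamma)$.

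The hard part, and essentially the only step that is not bookkeeping, is the fixed-target submonoid membership problem in case (ii). Unlike in case (i), one cannot simply invoke Proposition~\ref{Prop:newUndec-for-P4}, because $A(C_4)=F_2\times F_2$ contains no copy of $A(P_4)$ (for instance $A(P_4)$ is coherent, being a right-angled Artin group on a chordal graph, while $F_2\times F_2$ is not). So the genuine content is to exhibit, inside $F_2\times F_2$, a subgroup isomorphic to $G\ast F_3$ with $G$ having undecidable rational subset membership problem and with the two factors generating their free product; taking $G=F_2\times F_2$ this becomes a concrete ping-pong construction in the product of two trees on which $F_2\times F_2$ acts. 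Everything else in the theorem is an assembly of results already in hand.
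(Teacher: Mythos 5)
Your overall strategy --- embed $A(P_4)$ in case (i) and $A(C_4)\cong F_2\times F_2$ in case (ii), then transport undecidability to $A(\Gamma)$ along Remark~\ref{rmk:preservation} --- is exactly the route the paper intends (it gives no separate proof, deferring to the argument of Theorem~\ref{Thm:Undecidability-b4}), and your treatment of part (i), and of every item in part (ii) except one, is correct. The exception is precisely the step you single out as ``the hard part'': the fixed-target submonoid membership problem in case (ii). Your plan is to apply Lemma~\ref{Lem:reachability-for-RAT1-reduces-to-SMP-reach-xY}(i) with $G=F_2\times F_2$ and then embed $(F_2\times F_2)\ast F_3$ back into $F_2\times F_2$ by ping-pong. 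No such embedding exists, so this is a genuine gap. By the theorem of Baumslag and Roseblade, every finitely presented subgroup of a direct product of two free groups is free or virtually a direct product of two finitely generated free groups; $(F_2\times F_2)\ast F_3$ is neither, since it contains $\Z^2$ (ruling out free and virtually free) and has infinitely many ends, as does every finite-index subgroup of it, which rules out being virtually $F_k\times F_m$ with $k,m\geq 1$ (such products are one-ended). The same obstruction shows that no group $G\ast F_3$ with $G$ having undecidable rational subset membership problem can sit inside $F_2\times F_2$, so Lemma~\ref{Lem:reachability-for-RAT1-reduces-to-SMP-reach-xY} simply cannot be invoked internally to $F_2\times F_2$.

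The assertion itself is still true, but it needs a different mechanism: make Mikhailova's construction uniform by absorbing the input word into the presentation rather than into the target. Fix an embedding of $F(a_0,a_1,a_2,\dots)$ into $F_2$ and let $g_0$ be the image of $(a_0,1)$ in $F_2\times F_2$. Given a finitely presented group $Q=\pres{a_1,\dots,a_n}{r_1,\dots,r_m}$ with undecidable word problem and an input word $w$ over its generators, form $Q_w=\pres{a_0,a_1,\dots,a_n}{r_1,\dots,r_m,\,a_0w^{-1}}$ and its Mikhailova subgroup $H_{Q_w}\leq F_{n+1}\times F_{n+1}\leq F_2\times F_2$, computable from $w$. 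Since $Q_w\cong Q$ via $a_0\mapsto w$, we have $g_0\in H_{Q_w}$ if and only if $w=1$ in $Q$; and $H_{Q_w}$ is a finitely generated submonoid of $F_2\times F_2$ (generated by the subgroup generators of \cite{Mikhailova1958} together with their inverses). Hence the fixed-target submonoid membership problem for the fixed element $g_0$ is undecidable in $F_2\times F_2$, and Remark~\ref{rmk:preservation} transports this to $A(\Gamma)$. With this substitution for the one broken step, your proof is complete and otherwise agrees with the paper's.
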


\begin{corollary} \label{corol_Artin_in_general_OtherAlgProblems}
Each and every one of the following problems is decidable for an Artin group $A = A(\Gamma)$ 
if and only if none of the graphs in (a), (b) and (c) from the statement of Theorem~\ref{thm_Artin_in_general} embeds into $\Gamma$:
the rational subset membership problem, 
submonoid membership problem,
fixed-target submonoid membership problem,
and the subsemigroup intersection problem.
\end{corollary}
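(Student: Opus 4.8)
The plan is to deduce Corollary~\ref{corol_Artin_in_general_OtherAlgProblems} directly from Theorem~\ref{thm_Artin_in_general} and Theorem~\ref{thm_Artin_in_general_OtherAlgProblems}, essentially as a bookkeeping exercise assembling the various implications already established. The key point is that all four listed problems (rational subset membership, submonoid membership, fixed-target submonoid membership, subsemigroup intersection) share exactly the same decidability dividing line, namely the non-existence of an induced subgraph of type (a), (b), or (c).

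First I would treat the ``if'' direction: suppose $\Gamma$ does not embed any of the graphs in (a), (b) or (c) as an induced subgraph. Then by the implication ((iii) $\Rightarrow$ (ii)) of Theorem~\ref{thm_Artin_in_general}, $A(\Gamma)$ has decidable rational subset membership problem. It then remains to observe that decidability of the rational subset membership problem in a finitely generated group implies decidability of each of the other three problems: the submonoid membership problem is a special case (a finitely generated submonoid is a rational subset), the fixed-target submonoid membership problem is a further restriction of the submonoid membership problem, and the subsemigroup intersection problem reduces to rational subset membership because $S_1 S_2^{-1}$ is a rational subset and $\operatorname{Sgp}(X_1) \cap \operatorname{Sgp}(X_2) = \varnothing$ iff $1 \notin S_1 S_2^{-1}$ — all of this is recorded in Section~\ref{Subsec:decision problems}. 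Hence all four problems are decidable.

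For the ``only if'' direction, suppose $\Gamma$ does embed one of the graphs in (a), (b) or (c) as an induced subgraph. Then by Theorem~\ref{thm_Artin_in_general_OtherAlgProblems} (parts (i) and (ii), which together cover all of cases (a), (b), (c)), all four of the listed problems are undecidable in $A(\Gamma)$. This completes the equivalence for each of the four problems simultaneously, since each is decidable precisely when $\Gamma$ avoids the forbidden induced subgraphs.

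There is essentially no obstacle here: the corollary is a direct consequence of the two theorems it follows, and the only content is the elementary reductions among the decision problems, all of which have already been spelled out in the background section. The proof is therefore a short paragraph citing Theorem~\ref{thm_Artin_in_general}, Theorem~\ref{thm_Artin_in_general_OtherAlgProblems}, and the reductions in Section~\ref{Subsec:decision problems}.
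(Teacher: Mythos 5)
Your proposal is correct and matches the paper's (implicit) derivation exactly: the corollary is obtained by combining the ((iii)~$\Rightarrow$~(ii)) direction of Theorem~\ref{thm_Artin_in_general} with the standard reductions from Section~\ref{Subsec:decision problems} for the ``if'' direction, and Theorem~\ref{thm_Artin_in_general_OtherAlgProblems} for the ``only if'' direction. Nothing further is needed.
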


Just as for the braid group $\B_4$ we do not know exactly which Artin groups have decidable identity problem or group problem. Similarly, it is unknown exactly which Artin groups have decidable subgroup membership problem. In particular, it remains open whether the braid group $\B_4$ has decidable subgroup membership problem. Related to this, it follows from the proof of \cite[Lemma~4.2]{AlmeidaLima2021} that $\B_4$ contains $A(C_5)$, where $C_5$ is a pentagon, and it is also an open problem whether $A(C_5)$ has decidable subgroup membership problem; see \cite[Section~5]{Lohrey2008}.  

A natural question arising from the results in this section is whether one might also be able to classify the Coxeter groups with decidable submonoid membership problem. One current obstacle to this is that there are many Coxeter groups (even right-angled Coxeter groups) that are virtually hyperbolic surface groups 
(see e.g. \cite[Theorem~2.1]{Gordon2004}), and
the rational and submonoid membership problems are both currently open for hyperbolic surface groups.  
In particular, it follows from \cite[Theorem~2.1]{Gordon2004} that the right-angled Coxeter group of the pentagon with presentation 
\[
\pres{a_0,a_1,a_2,a_3,a_4}{a_i^2=1, a_i a_{i+1} = a_{i+1}a_i \; (i \in \{0,1,2,3,4\})} 
\] 
with subscripts taken modulo $5$,  
contains a hyperbolic surface subgroup of finite index, so it would be interesting to determine whether or not this group has decidable submonoid membership problem or rational subset membership problem. 

\bibliography{BobCFArtinJLMS-FinalVersion-30Sep2025} 
\bibliographystyle{amsalpha}

\end{document}